\newtheorem{lem}{Lemma}
\newtheorem{prop}{Proposition}
\newtheorem{thm}{Theorem}
\newtheorem{cor}{Corollary}
\newcommand{\CP}{\mathbb{CP}}
\newcommand{\kahler}{K\"ahler }
\renewcommand{\d}{\partial}
\newcommand{\dbar}{\bar\partial}
\newcommand{\om}{\omega}
\newcommand{\dcal}{\mathcal D}
\newcommand{\ddbar}{\partial\dbar}
\newenvironment{example}{\medskip\noindent{\it Example:\/} }{\medskip}
\newcommand{\R}{\mathbb{R}}
\newtheorem{rem}{Remark}
\newcommand{\ecal}{\mathcal{E}}
 \def   \half   {{\frac{1}{2}}}
\newcommand{\C}{\mathbb{C}}
\def \to {\rightarrow}
\title{Energies of Random zeros on }
\begin{document}\title []
{Random Riesz energies on compact K\"{a}hler manifolds}
\author[Renjie Feng]{Renjie Feng}
\author[Steve Zelditch]{Steve Zelditch}
\address{Department of Mathematics, Northwestern  University, Evanston, IL 60208, USA}
\email{renjie@math.northwestern.edu \\ zelditch@math.northwestern.edu}

\thanks{Research partially supported by NSF grant  \# DMS-0904252.}

\date{\today}

\begin{abstract}The expected   Riesz energies $E_{\mu^N_h}\mathcal E_{s}$
of the zero sets of   systems of independent  Gaussian random polynomials are determined asymptotically
as the degree $N \to \infty$ in all dimensions and codimensions.   The asymptotics are
proved for sections of any positive line bundle over any compact K\"ahler manifold.
\end{abstract}

\maketitle
\section{Introduction}

A classical problem is to determine the optimal  configurations of $N$  points $\{z^1, \dots, z^N\}$  on the unit sphere $S^2$,  i.e. configurations which  minimize some  energy  function,
 \begin{equation}\label{def}
\mathcal E_{s}(z^1,\cdots,z^N)=\sum_{i\neq j} G_s(z^i,z^j).
 \end{equation}
Natural choices of the kernel $G_s(z,w)$ are the Riesz kernels,
\begin{equation}\label{riea} G_s(z,w)= \left\{ \begin{array}{ll} r_g(z,w)^{-s}, & \,\,\,  0 < s<2m \\ & \\
 - \log r_g(z,w), &\,\,\,s=0 . \end{array} \right. \end{equation}
Here,  $r_g(z,w) $ is the geodesic distance between $z, w$. The assumption $0<s<2m$ ensures that the Riesz function $r_g(z,w)^{-s}$ is integrable.
The minimal  energy is  denoted by
\begin{equation} \label{MINEN} \ecal_{s}( N) =  \min_{\{z_1, \dots, z_N \}} \mathcal E_s (z^1,\dots,z^N). \end{equation}
 For background
 on this problem, we refer to \cite{BBGKS, HS}.  The problem  of  minimizing energy is of interest  on any compact Riemannian manifold, although it seems mainly to have been studied on the standard sphere.

 Of course, it is a very demanding problem
to determine the actual optimizers. Related problems are to find the asymtotics
of the minimial energy \eqref{MINEN} as $N \to \infty$ and to determine the properties of asymptotically minimizing
configurations, which are sometimes called elliptic Fekete points.  For instance, to minimize energy  the points must be well-separated and equidistributed with respect to the volume
form.

This article is concerned with the expected energies $E_{\mu^N_h}\mathcal E_{s}$  of
the zero sets of Gaussian random systems of holomorphic polynomials and their generalizations to
any compact \kahler manifold. Thus, we do not study   optimal configurations but rather
consider the
separation and equidistribution properties of  configurations defined by random zero sets as measured by the energies \eqref{def}.

Our results generalize those of a  recent pair of articles \cite{Z,ABS} in the complex one
dimensional case, which showed that the zeros of Gaussian random holomorphic polynomials
of one complex variable are asymptotic energy minimizing configurations to a certain order in $N$.
The results are reviewed in \S \ref{Zh}.  Such polynomials are  holomorphic sections of ${\mathcal O} (N) \to \CP^1 = S^2$,
the $N$th power of the hyperplane bundle over the Riemann sphere. It has been known for some time that  in complex dimension one, the zeros of random
polynomials of degree $N$  (or of any holomorphic sections) repel each other so that the nearest neighbors are almost always
of order  $\frac{1}{\sqrt{N}}$ apart. More precisely, the  probability of pairs being $\frac{C}{\sqrt{N}}$
apart can be determined   from the scaling asymptotics of the two-point correlation function of
zeros in \cite{H, BSZ1,BSZ2,BSZ3}.  Furthermore,  the `empirical measure of zeros' defined by
\begin{equation}\label{EMP}  \mu_z = \frac{1}{N} \sum_{z: s^N(z)=0} \delta_z \end{equation}
tends to the `equilibrium measure' given by the Riemannian area form \cite{SZ2}.
The leading order asymptotics of the energy mainly reflect this repulsion and equidistribution.
Repulsion  and equidistribution are not sufficient to ensure that random zero point configurations asymptotically minimize energy, and indeed one would
not expect a random configuration to do as well as a carefully chosen one.  But in
complex dimension one, random zero sets do surprisingly well (see \S \ref{Zh}).

 In this article, we consider the expected energies of the zero ses
of a random  full system of $m$  holomorphic polynomials  on a complex projective space $\CP^m$ of any
dimension. In fact, we consider the problem for sections of ample holomorphic line bundles $L \to M$  over general \kahler
manifolds, and moreover for systems of $k \leq m$ holomorphic sections.
We refer to the case of a full
system of $m$ polynomials in complex dimension $m$, whose simultaneous zero set is (almost surely) a discrete
set of points, as the {\it point case} or equivalently as the {\it discrete case}.

In higher dimensions, we do not expect the complex zeros in the discrete case to
asymptotically minimize energy.  They do become equidstributed with respect to
the equilibrium measure but  they are not well separated.
As recalled in \S \ref{PCC},    the scaling limits of the pair correlation function of
zeros were calcuated in \cite{BSZ1,BSZ2,BSZ3}  in all dimensions and co-dimensions. The results show that, in the point case,  the zeros almost ignore each other
with only a slight repulsion at very close distances in dimension 2, and they in fact attract or clump together in
higher dimensions. The results of this article  give a somewhat different perspective on the
separation of the zeros as reflected by energy asymptotics, or more precisely on the distance of zeros
from the discriminant variety of polynomial systems with a double root (see \S \ref{CONFIG}).

Before stating our results on expected energies, let us review what is known about the minimum energies
of configurations of points in the point case so that we can compare expected energies
to minimal energies.

\subsection{Minimal Energies}\

First, we review results on  $\CP^1 = S^2$.  We caution that
different authors use slightly different normalizations of the radius of the sphere and
the choice of Riesz kernels \eqref{riea} and  the coefficients of the asymptotic
energies reflect these choices.

In  \cite{KS,BBP,RSZ}, the authors consider the
energy \eqref{def} with
$$G_s(z,w) = \frac1{[z,w]^s},\,\,\, (s\neq 0); \; \mbox{resp.}  \;\; G_0(z,w)  =   \log
\frac{1}{[z, w ]},\,\,\,\, (s=0). $$ Here, $[z,w]$ is the chordal distance
between two points on $S^2 \subset \R^3$ rather than  the
geodesic distance as in (\ref{riea}).  The are related by $[z,w] = 2 \sin \frac{r}{2}$ on
the sphere of radius $\half$ (i.e. area $\pi$).
  In   \cite{KS, W1, W2},  it is proved that

\begin{equation}\label{6} \left\{ \begin{array}{ll} \ecal_{2}( N) \sim \frac{1}{8}  N^2 \log N ,  & s = 2 \\ &  \\ C_1(s) N^{1+\frac s2}\leq \ecal_{s}( N) \leq
C_2(s) N^{1+\frac s2}, & s > 2, \\ & \\
\frac12V_2(s)N^2-C_3(s)N^{1+\frac s2}\leq \ecal_{s}( N) \leq \frac12V_2(s)N^2-C_4(s)N^{1+\frac s2}, & s < 2 \end{array} \right. \end{equation}
where all $C_j(s)$ and
$V_2(s)$ are positive constants independent of $N$.

The  asymptotic energy $\ecal_{0} (N)$ on the standard $S^2$ was further  studied in \cite{BBP, RSZ}. Theorem 3.1 of \cite{RSZ} asserts that
if $C_N$ is defined by
\begin{equation}\label{dhdhd}
\ecal_{0}( N) =-\frac{1}4\log (\frac{4}{e})N^2-\frac{N}{4}\log N+C_N N,
\end{equation}
then for the standard $S^2$ one has
$$-\frac1 4\log \frac{\pi \sqrt 3}{2}-\frac{\pi}{8\sqrt 3}\geq\limsup_{N \to \infty} C_N\geq \liminf_{N \to \infty} C_N \geq - \frac{1}{4} \log\left( \frac{\pi}{2}(1 - e^{-a})^b \right), $$
for certain explicit constants $a, b$. But the precise value of $C_N$ is not  known.

In higher dimensional sphere $S^m$, results of \cite{B, KS} show that

\begin{equation}\label{minm2} \left\{ \begin{array}{ll}
C_1(s)N^{1+s/m}\leq \ecal_s(N)\leq C_2(s)N^{1+s/m}, & s > m \\ & \\
 \ecal_m(N)\sim \gamma_m N^2\log N, & s = m \\&\\
 \frac1
2 V_m(s)N^
2- C_3(s) N^{
1+\frac sm}\leq \ecal_s(N)\leq \frac1
2 V_m(s)N^
2- C_4(s) N^{
1+\frac sm}, & s<m
\end{array} \right. \end{equation}
where $V_m(s)$,  $\gamma_m$ and $C_j(s)$ are positive constants independent of $N$.
Many further references and results can be found in \cite{BBGKS}.

In the case of the Green's function $G(z,w)$ of any compact Riemannian manifold, i.e. the kernel of $\Delta^{-1}$ on the orthogonal complement of
the constant functions,   N. Elkies proved the general lower bound
$$\ecal_{\mbox{Green}}(N)  \geq  -\frac{1}{4} \log (\frac{4}{e}) N^2 - \frac{1}{4 } N \log N
- \frac{11}{6 \pi} N + o(N). $$  Proofs can be found in  \cite{Hr, La,HiS}.  In complex dimension one, the lower bound follows
from Mahler's inequalities  \cite{M}. Although the right side is very precise, we are not aware of a comparable upper bound
in the literature.

\subsection{\label{Zh} Expected  Energies of zeros of random polynomial systems}\

We now consider the energies of random  configuration of points which are
defined as the  zeros of systems of  random polynomials (or sections). Let  $(L,h) \to (M, \omega)$ be  a positive holomorphic line bundle equipped with a Hermitian metric $h=e^{-\phi}$  ($\phi$ is the \kahler potential.   Let $H^0(M,L^N)$ be the space of the global holomorphic sections of the line bundle $L^N$; put $d_N:=\dim H^0(M,L^N)$. For notation and background we refer to \cite{GH,SZ2}.  We normalize the volume form by  \begin{equation} \label{TILDEV}   d\tilde V:=\left(\frac{\omega(z)}{\pi}\right)^m,\;\;\;
\int_M \left(\frac {\omega}{\pi}\right)^m=1. \end{equation}

We denote the zero set of a full system $\{s_1^N, \dots, s_m^N\}$ of $m$ polynomials (or sections) in dimension $m$ by
\begin{equation} \label{ZEROSMAP} Z_{{s^N_1,\cdots,s^N_m}} = \{(z^1, \dots, z^m) \in M: s^N_j(z^1, \dots, z^m) = 0, \; \forall j = 1, \dots, m\}. \end{equation}
The Riesz energy \eqref{def} of a configuration of zeros is thus given by
 \begin{equation}\label{randomly}\mathcal E_{s}(Z_{{s^N_1,\cdots,s^N_m}}) =\sum_{\substack{z^i\neq z^j\\z^i,z^j\in Z_{{s^N_1,\cdots,s^N_m}}}}G_{s}(z^i,z^j)\end{equation}

As in \cite{SZ1,SZ2,BSZ1} and elsewhere, we endow the spaces  $H^0(M, L^N)$ with  Gaussian probability measures. They
are induced by the inner product defined as follows: Let
 $h^N$ on $L^N$ denote the $N$th power of the Hermitian metric  $h$ on $L$, and define
\begin{equation}\label{innera}\langle s_1,  s_2 \rangle = \int_M h^N(s_1,
s_2)\,\frac 1{m!}\om^m\;,\qquad s_1, s_2 \in
H^0(M,L^N)\,\;.\end{equation}  The \ {\em Hermitian Gaussian measure}
on
$H^0(M,L^N)$ is the complex Gaussian probability measure $\mu_h^N$
induced on $H^0(M, L^N)$ by the inner product \eqref{innera}:
\begin{equation} \label{HGM} d\mu_h^N(s)=\frac{1}{\pi^m}e^
{-|c|^2}dc\,,\qquad s=\sum_{j=1}^{d_N}c_jS^N_j\,,\end{equation}
where $\{S_1^N,\dots,S_{d_N}^N\}$ is an orthonormal basis for
$H^0(M,L^N)$.

The Gaussian measure $d\mu_h^N$ induces product measures on $(H^0(M, L^N))^k = H^0(M, L^N) \times \cdots
\times H^0(M, L^N)$ ($k$ times). For simplicity of notation we continue to denote them by $d\mu_h^N$, leaving
it to the context to make clear how may factors of $H^0(M, L^N)$ are involved. Product measure corresponds
to choosing  $k \leq m$ independent random sections, hence a Gaussian random system.
We define $E_{\mu^N_h}\mathcal E_{s}$ to be the expected (average) value of
the energy of the zeros of Gaussian random sections chosen from the
ensemble $(H^0(M,L^N),\mu_h^N)$. In the discrete case,
\begin{equation}\label{defd}
E_{\mu^N_h}\mathcal E_{s}=\int_{(H^0(M,L^N))^m}\mathcal E_{s}(Z_{s^N_1,\cdots,s^N_m})d\mu_h^N(s)=\int_{(\C^{d_N})^m}\mathcal E_{s}(Z_{s^N_1,\cdots,s^N_m})\frac{e^{-|c|^2}}{\pi^{d_N}}dc,
\end{equation}

Before stating our results in the discrete case, we recall the results of \cite{Z,ABS} when $m = 1$.
In the special case of the standard metric on $S^2$ and with the logarithm Riesz energy, it is proved in  \cite{Z, ABS} that \begin{equation}  \label{sphere}
E_{\mu_{h^N_{FS}}}\mathcal E_{{0}}= \left\{  \begin{array}{cc} \frac{N^2}{4}-\frac1{4}N\log
N-\frac{N}{4}+o(N), & \mbox{radius}\; = \half \\ &\\
- \frac{1}4\log(\frac 4 e)N^2-\frac1{4}N\log
N +   \frac{1}{4} \log(\frac 4 e) N +o(N), & \mbox{radius} \; = 1.
\end{array} \right. \end{equation}
In  our notation, $S^2$
is the \kahler manifold  $(\CP^1,\omega_{FS})$,
where $\omega_{FS}$ (or $h_{FS}$)  is the Fubini-Study metric and the holomorphic line bundle $L$ is hyperplane line bundle $\mathcal O(1)$; the area of
 $(\CP^1, \omega_{FS})$ equals $1$, hence   is the round $S^2$  of radius
$\frac1
2$.
The expected energy of $N$ randomly chosen points (i.e. from the binomial point process)
equals $- \frac{1}4\log(\frac 4 e)N^2 +   \frac{1}{4} \log(\frac 4 e) N .$
 Thus, the  $- N \log N$ term  reflects the lower energy of well-separated
points.

To be precise,   Q. Zhong  \cite{Z} obtained general asymptotic results  for any \kahler metric on any Riemann surface,
where the energy was defined in terms of the Green's function,
 \begin{equation}\label{green}G_0(z,w)=-\frac 1 2\log r_g(z,w)+F_g(z,w)\end{equation}
 where $F_g$ is the Robin function such that $\int_M G_0(z,w)\omega(z)=0$.
 In the case of the Green's energy,
 Zhong proved that
 \begin{equation}\label{main theorem}E_{\mu_h^N} \mathcal E^N_{0}= -\frac1{4}N\log
N-\frac{N}{4}-N\int_M F_{g}(z,z)\omega+o(N).
\end{equation}
where $\int_M F_{g}(z,z){\omega}$ is the Robin constant.  The $N^2$ term vanishes since the integration of the Green's function is $0$.
In the special case   of
the standard metric on $S^2$,
 Armentano-Beltran-Shub \cite{ABS} later showed that the expected energy is exactly
given by the formulae in \eqref{sphere} without the $o(N)$ term.

   The asymptotic results of \cite{Z, ABS} show that the expected energy of zero point configurations
is asymptotically the same as the minimum energy of any configuration up to order $N$ (it appears
to remain unknown what the coefficient of the $N$ term is for the minimal energy).
See also \cite{ZZe} for a correction and addendum to the calculations in \cite{Z}.

\subsection{Main Results in the discrete case}\

In this article, we consider random Riesz s-energies for zeros of polynomial systems
and their analogues on  a general
compact \kahler manifold of any dimension $m$  and for a certain range of $s$. In the discrete case we obtain the following
asymptotic expansion for the average Riesz energy.

\begin{thm}\label{dis} Let $(M,\omega)$ be an $m$-dimensional  compact \kahler manifold and let $(L, h) \to (M, \omega)$
be a positive Hermitian holomorphic line bundle.   Then the expected
 Riesz energies defined in (\ref{defd}) admit the following asymptotics,
\begin{itemize}
\item If $0<s< \min\{2m, 4\}$,
\begin{align*}E_{\mu_h^N}\mathcal E_{s}=&a_1(h,s)N^{2m}+\sum_{j=2}^{p-1}a_j(h,s)N^{2m-j}\\&+c_m(s)N^{m+\frac{s}{2}}+O(N^{m+\frac{s-1}{2}}(\log N)^{m-\frac{s}{2}})\end{align*} where $p =[m-\frac{s}{2}]+1$, where $a_j(h, s)$ are the coefficients in Lemma \ref{k1},  and
 where
$$c_m(s) = 2m\int_{0}^{\infty} \left[\kappa_{mm}(r)-1\right]r^{2m-1-s}dr . $$
 \item In the case of  logarithmic energy  when $s=0$, we have,\begin{align*}E_{\mu_h^N}\mathcal E_{0}&=a_1(h,0)N^{2m}+a_2(h,0)N^{2m-2}+\cdots+[a_m(h,0)-c_m]N^m\\& -N^{m }\log \sqrt N
+O(N^{m-\frac{1}{2}}(\log N)^{m+1}),\end{align*}
where
 \begin{equation}\label{eq:dddrreneweq}c_m=2m\int_0^{\infty} \log r\left[\kappa_{mm}(r)-1\right]r^{2m-1}dr.\end{equation}

 \end{itemize}

 \end{thm}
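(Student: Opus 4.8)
\textbf{Proof proposal for Theorem \ref{dis}.}

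The plan is to reduce the expected energy to an integral of the Riesz kernel against the pair correlation density of zeros, and then insert the known scaling asymptotics for that density. By the Kac--Rice formula applied to a full system of $m$ sections, the expected energy $E_{\mu_h^N}\mathcal E_s$ equals $\int_{M\times M} G_s(z,w)\, K_2^N(z,w)\, d\tilde V(z)\, d\tilde V(w)$, where $K_2^N$ is the (normalized) two-point correlation function of the simultaneous zero set. The first step is therefore to recall from \cite{BSZ1,BSZ2,BSZ3} the universal near-diagonal scaling limit: after rescaling distances by $\sqrt N$, one has $K_2^N(z,w) \to d_N^2\,\kappa_{mm}(r)$ with $r=\sqrt N\, r_g(z,w)$, uniformly, with $\kappa_{mm}(r)\to 1$ exponentially fast as $r\to\infty$ and $\kappa_{mm}$ smooth at $r=0$. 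The off-diagonal part contributes the polynomial-in-$N$ terms $\sum_j a_j(h,s) N^{2m-j}$ coming from Lemma \ref{k1} (the expansion of $\int G_s \,d\tilde V\,d\tilde V$ against the Bergman-type density), so the real content is isolating the near-diagonal correction.

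Second, I would split the integral into a region $r_g(z,w) \ge N^{-1/2+\epsilon}$ and a shrinking neighborhood of the diagonal. On the far region, the rapid decay of $\kappa_{mm}(r)-1$ together with the off-diagonal expansion gives exactly the $a_j(h,s)N^{2m-j}$ terms up to the stated error. On the near-diagonal region, I change variables $w = \exp_z(u/\sqrt N)$, so $d\tilde V(w)$ contributes a factor $N^{-m}$ times the Euclidean volume plus curvature corrections, and $G_s = r_g^{-s} = N^{s/2} |u|^{-s}(1+O(|u|^2/N))$. Collecting powers gives the scale $N^{2m} \cdot N^{-m} \cdot N^{s/2} = N^{m+s/2}$ for the leading near-diagonal term, with coefficient $d_N^2 N^{-2m}$ (which tends to a constant times the volume, namely $1$ under the normalization \eqref{TILDEV}) times $\int_{\R^{2m}} |u|^{-s}[\kappa_{mm}(|u|)-1]\, du$. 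Passing to polar coordinates in $\R^{2m}$ turns this into $2m\int_0^\infty [\kappa_{mm}(r)-1] r^{2m-1-s}\,dr$, which is finite precisely because $2m-1-s > -1$ (i.e. $s<2m$) handles $r\to 0$ and exponential decay of $\kappa_{mm}-1$ handles $r\to\infty$; this is the constant $c_m(s)$. The restriction $s<4$ is what guarantees that the \emph{next} correction term — from the $O(|u|^2)$ curvature expansion of the metric and the kernel, which scales like $N^{m+(s-2)/2}$ — is dominated by, or comparable to, the claimed error $O(N^{m+(s-1)/2}(\log N)^{m-s/2})$; here the logarithmic factors come from the subleading terms in the Bergman kernel / correlation expansion on the annular transition region.

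Third, the logarithmic case $s=0$ is handled the same way with $G_0 = -\log r_g = \tfrac12\log N - \log|u|$ after rescaling. The $\tfrac12\log N$ prefactor, multiplied by the diagonal mass $\sim d_N^2 N^{-2m}\,\mathrm{vol} \sim N^m$, produces the $-N^m\log\sqrt N$ term (sign and normalization as in the statement), while the $-\log|u|$ piece against $[\kappa_{mm}-1]$ gives $-c_m N^m$ with $c_m = 2m\int_0^\infty \log r\,[\kappa_{mm}(r)-1] r^{2m-1}\,dr$, combining with $a_m(h,0)$ into the coefficient $a_m(h,0)-c_m$ of $N^m$. I expect the main obstacle to be the uniform control on the transition annulus $r_g \sim N^{-1/2}$: one needs off-diagonal estimates on $K_2^N$ that interpolate between the global (Bergman expansion) regime and the universal scaling regime, with explicit error bounds strong enough to yield the precise $(\log N)^{m-s/2}$ (resp.\ $(\log N)^{m+1}$) power — this is where the careful bookkeeping of remainder terms in the Szeg\H{o}/Bergman kernel asymptotics, and in particular the off-diagonal decay of $|K_N(z,w)|^2/(K_N(z,z)K_N(w,w))$ at polynomial scales, must be assembled. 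Everything else is a matter of organizing the change of variables and the elementary radial integrals.
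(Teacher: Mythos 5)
Your overall strategy matches the paper's: write the expected energy as the pairing of $G_s$ with the pair correlation current minus its diagonal part, split the integral at distance of order $\sqrt{\log N/N}$, obtain the global terms $a_j(h,s)N^{2m-j}$ from the off-diagonal decay together with Lemma \ref{k1}, and rescale $w=z+u/\sqrt N$ near the diagonal to extract $c_m(s)N^{m+s/2}$ in polar coordinates. There are, however, two genuine gaps.

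First, you assert that $\kappa_{mm}$ is smooth at $r=0$ and that convergence of $\int_0^\infty[\kappa_{mm}(r)-1]r^{2m-1-s}\,dr$ at $r=0$ is governed by $s<2m$, with $s<4$ needed only to control a subleading curvature correction. This is not correct: by \eqref{leading}, $\kappa_{mm}(r)=\frac{m+1}{4}r^{4-2m}+O(r^{8-2m})$ as $r\to 0$, so for $m\ge 3$ the scaling limit blows up at the origin and the integrand behaves like $\frac{m+1}{4}r^{3-s}$ there. The hypothesis $s<4$ is exactly what makes the defining integral for $c_m(s)$ finite; it is not a condition on the remainder. Since this blow-up (clumping of zeros in dimensions $m\ge 3$) is the central phenomenon the coefficient $c_m(s)$ is meant to capture, the misattribution is substantive and your argument as written would claim convergence for all $s<2m$, which is false for $m\ge 3$.

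Second, in the case $s=0$ the coefficient $-1$ of $N^m\log\sqrt N$ rests on the identity
\begin{equation*}
\int_{\C^m}\left[\kappa_{mm}(|u|)-1\right]\left(\frac{i}{2\pi}\ddbar|u|^2\right)^m=-1,
\end{equation*}
which the paper proves by a global mass count: $\int_{M\times M}K^N_{mm}=N^{2m}$, the diagonal carries mass $N^m$, and $\int_{M\times M}E^N_{mm}\otimes E^N_{mm}=N^{2m}$, the far-off-diagonal contribution being $O(N^{-q})$ by \eqref{of2f}. Your appeal to ``the diagonal mass $\sim d_N^2N^{-2m}\mathrm{vol}\sim N^m$'' gestures at this but does not establish the value $-1$ or its sign; without it the term $-N^m\log\sqrt N$ and the grouping $a_m(h,0)-c_m$ are unjustified. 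A smaller issue: your cutoff at $r_g\ge N^{-1/2+\epsilon}$ forces the rescaled variable to range over $|u|\le N^{\epsilon}$, where the $O(N^{-1/2})$ error in the scaling asymptotics \eqref{current} and the curvature expansions are only controlled for $|u|\lesssim\sqrt{\log N}$; the paper's cutoff $b\sqrt{\log N}/\sqrt N$ with $b>\sqrt{q+2m+3}$ is what produces the stated $(\log N)^{m-s/2}$ and $(\log N)^{m+1}$ error powers.
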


  Above, the  coefficient $c_m(s)$ involves the
scaling limit  $\kappa_{mm}(r)$ of the pair
correlation function of zeros in the discrete case in dimension $m$. As  reviewed in \S \ref{PCC},   it has the small distance asymptotics,
\begin{equation} \label{leadinga} \kappa_{mm}(r)= \frac{m+1}{4}
r^{4-2m} + O_m(r^{8-2m})\,,\qquad\mbox{as }\ r\to 0\,.\end{equation}
Hence the integral defining the coefficient  $c_m(s)$  only converges if $s \leq 4$.
It is independent of the Hermitian metric $h$.

The coefficients $a_j(h,s)$ are integrals of $G_s(z,w)$ against  curvature invariants  of $h$ and  can be explicitly computed by the algorithm in Lemma \ref{k1}. In particular,  the first two are given by
\begin{equation}\label{coe}
\begin{aligned}
a_1(h,s)=&\int_{M\times M}G_s(z,w)d\tilde V(z)d\tilde V(w)\\
a_2(h,s)=&-2\int_{M\times M}G_s(z,w) \Delta_\omega S(z)d\tilde V(z)d\tilde V(w)
\end{aligned}\end{equation} where $\Delta_\omega$ is the complex Laplace operator with respect to the \kahler form $\omega$, and $S$ is the scalar curvature of $\omega$.

We can give a more explicit evaluation in
  the special case of $(\CP^m, \omega_{FS})$  and  $(L , h)= (\mathcal O(1), h_{FS})$. As is well known,
in a suitable frame, we  can identify $H^0(\CP^m,\mathcal O (N))$ with the space of polynomials of $m$ variables of degree $N$
\cite{GH}.
\begin{cor} \label{1}  For the space $H^0(\CP^m,\mathcal O (N))$ of polynomials of degree $N$  equipped with the Gaussian
Hermitian measure  induced by $h_{FS}$, the Riesz energies
of zeros of $m$ independent random polynomials are given by

\begin{itemize}
\item If $0<s< \min\{2m, 4\}$,
$$E_{\mu_h^N}\mathcal E_{s}=  a_1(h_{FS},s)N^{2m} + c_m(s)N^{m+\frac{s}{2}}+O(N^{m+\frac{s-1}{2}}(\log N)^{m-\frac{s}{2}})$$

 \item For the case of logarithm random energies, we have,$$E_{\mu_h^N}\mathcal E_{0}= a_1(h_{FS},0)N^{2m} - N^m\log \sqrt N-c_m N^m+O(N^{m-\frac{1}{2}}(\log N)^{m+1}).$$
 \end{itemize}
 \end{cor}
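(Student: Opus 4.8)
The plan is to deduce the corollary directly from Theorem \ref{dis} by showing that, for the Fubini--Study data $(\CP^m,\om_{FS})$ and $(L,h)=(\mathcal O(1),h_{FS})$, every coefficient $a_j(h_{FS},s)$ with $j\geq 2$ vanishes. The remaining term $c_m(s)$ (resp.\ $c_m$) is, as noted right after Theorem \ref{dis}, independent of the Hermitian metric and is determined only by the universal scaling limit $\kappa_{mm}(r)$ of the pair correlation, so it is unchanged; likewise the error term is the one already supplied by Theorem \ref{dis}, since the homogeneity will only make the estimates cleaner, not worse. Thus the whole content of the corollary is the vanishing of the subleading curvature coefficients: once $a_j(h_{FS},s)=0$ for $j\geq 2$, both displayed formulas of the corollary are obtained by substituting into Theorem \ref{dis} (in the logarithmic case the $N^m$ term collapses from $[a_m(h_{FS},0)-c_m]N^m$ to $-c_mN^m$).

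To prove this vanishing, I would use the description of the $a_j$ from Lemma \ref{k1}: for $j\geq 2$, $a_j(h,s)$ is an integral of $G_s(z,w)$ against a local curvature invariant of $\om$ — for instance $a_2(h,s)=-2\int_{M\times M}G_s(z,w)\,\Delta_\om S(z)\,d\tilde V(z)\,d\tilde V(w)$ from \eqref{coe} — coming from the subleading terms of the diagonal and near-diagonal expansions of the expected joint intensity of zeros. On $(\CP^m,\om_{FS})$ the group $PU(m+1)$ acts transitively by isometries, and this action lifts to $\mathcal O(1)$ preserving $h_{FS}$, hence the inner product \eqref{innera} and the Gaussian ensemble $\mu^N_{h_{FS}}$. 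Consequently the Bergman density $\sum_j|S^N_j(z)|^2_{h_{FS}^N}$ is constant on $\CP^m$ and the expected joint intensity of zeros depends only on the geodesic distance $r_g(z,w)$. In particular the curvature invariants occurring in the $a_j$, $j\geq 2$ — built from covariant derivatives of the curvature and from $\ddbar$ applied to the (constant) coefficients of the density expansion, as with $\Delta_\om S$ for $a_2$ — vanish identically. Hence $a_j(h_{FS},s)=0$ for all $j\geq 2$. This step — confirming that every $a_j$, $j\ge 2$, is exactly zero rather than merely of lower order — is the only real obstacle, and it rests on pinning down, via Lemma \ref{k1}, that these invariants genuinely carry at least one derivative of the curvature (equivalently, that they are produced by $\ddbar$ acting on the TYZ coefficients), so that they vanish on a manifold with covariantly constant curvature and constant scalar curvature.

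As an alternative, self-contained route I would simply repeat the proof of Theorem \ref{dis} in the homogeneous setting. Writing $E_{\mu_h^N}\mathcal E_s=\int_{M\times M}G_s(z,w)\,\widetilde K_N(z,w)$ for the expected joint intensity $\widetilde K_N$, homogeneity gives $\widetilde K_N(z,w)=F_N(r_g(z,w))$, so (using the normalization $\int_M d\tilde V=1$) the double integral collapses to a single integral in geodesic polar coordinates on $\CP^m$ about a fixed point. Splitting this integral at radius of order $N^{-1/2}\log N$, the near-diagonal part reproduces the universal term $c_m(s)N^{m+\frac s2}$ (resp.\ $-N^m\log\sqrt N-c_mN^m$) exactly as in Theorem \ref{dis} via the scaling limit $\kappa_{mm}$, while the complementary part equals $a_1(h_{FS},s)N^{2m}$ with no lower-order corrections precisely because the Bergman density, and hence the leading off-diagonal behavior of $\widetilde K_N$, is exactly constant. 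Either way the corollary follows.
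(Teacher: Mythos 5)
Your primary argument is correct and is essentially the paper's own proof: the paper justifies the corollary in one line by noting that $h_{FS}$ is balanced, so $\Pi_N(z,z)$ is constant, hence $\ddbar\log\Pi_N(z,z)=0$ and $E^N_{mm}=(N\om/\pi)^m$ exactly, killing every $a_j(h_{FS},s)$ with $j\geq 2$ in Lemma \ref{k1}; your homogeneity/TYZ-coefficient reasoning is exactly this, just spelled out. (Your alternative self-contained route via collapsing the double integral in geodesic polar coordinates is also sound but unnecessary given Theorem \ref{dis}.)
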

Thus, all of the terms $a_j(h_{FS}, s)$ vanish for $j\geq 2$.  The proof follows directly from the proof of Lemma \ref{k1} and  based on the fact that the Fubini-Study
metric is balanced, i.e. the Szeg\"o kernel is constant on the diagonal.

The  asymptotics reflect the local behavior (repulsion/attraction) of the zero-point configurations
and the long-range order (uniform distribution) as follows:
\begin{itemize}

\item
The leading coefficient $a_1(h, s)$ is the ``Poisson term", i.e. the scaling limit
of the expected energy of a binomial point process where one chooses $N^m$ points at random from $M$, i.e. a configuration at random from  $M^{(N^m)}  =
\underbrace{M\times\cdots\times M}_{N^m}/S_{N^m}$ (the $N^m$-fold symmetric product)
with respect to $d \tilde{V}$. See \S \ref{CONFIG} for further discussion.

\item The  coefficients $a_j(h,s)$ are global  lower order corrections due to the non-constant curvature of the
K\"ahler metric.

\item The terms $c_m(s)N^{m+\frac{s}{2}}$ and $-N^m\log \sqrt N-c_mN^m$ reflect the local behavior (i.e. separation,
repulsion or clumping)
of the zeros.  They are derived from the integration of Riesz functions against the current of two point correlations near the diagonal of $M\times M$. The separation of the
zeros depends on the dimension $m$.

\item   The overall asymptotics reflects a competition between the global and local behavior of zeros.   The larger
$s$ is, the more significant is the separation of zeros.
When $s>2$, the local behavior of zero-point configurations dominates the global behavior; when $s<2$, the global behavior dominates the local one.

\end{itemize}

The asymptotic average energy of zero point configurations may be compared with the
minimum energy \eqref{minm2}, keeping in mind that the expected number of points in
the configuration of zeros is  $N^m$. Of course, $S^n$ is not a K\"ahler manifold for
$n \geq 3$, so no exact comparison can be made between the minimum and average
energies. The closest K\"ahler analogue of a round sphere $S^{2m}$  is $\CP^m$ with its Fubini-Study
metric of constant positive curvature. For Riesz energies with  $0 < s < \min\{2m, 4\}$  the results of Corollary \ref{1} may be compared
with the result for the standard $S^{2m}$  that $$
\frac1
2 V_{2m}(s)N^
{2m}- C_3(s) N^{
m+s/2} \leq \ecal_s(N^m)\leq
\frac1
2 V_{2m}(s)N^{2m}- C_4(s) N^{m
+s/2}. $$
The exponent $s/2$ arises because we have $N^m$ points in real dimension $2m$.
The leading coefficients of the minimum and average energies are both given by the Poissonian term and
may be considered as the same.  Hence the difference between minimum energy and
mean energy of zero point configurations is first detected in the subleading terms of order $N^{m + \frac{s}{2}}$,
which as discussed above is due to the separation between points.
The coefficient $C_j(s)$ is not known explicitly, but it is significant to  compare the signs of $C_j(s) $ and $c(s)$. It is
known that $C_j(s) > 0$, so that the sub-leading term of the minimum energy is negative. Intuitively, we expect
that $c_m(s) > 0 $ for  $s$ near $4$  when $m \geq 3$ due to the clumping of the zeros as compared to the repulsion
that must take place in an optimal configuration.
The exact formula for $\kappa_{mm}(r)$ is given in \cite{BSZ3} and recalled
in \eqref{pointpaircor}. It  is very complicated and represents the pair correlation function
for zeros of random entire functions in the Bargmann-Fock model, i.e. entire analytic functions on
$\C$ which are in $L^2(\C, e^{- |z|^2}). $ $\kappa_{mm}(r)$ blows up to $+ \infty$ for $r \to 0$ if $m \geq 3$,  supporting the intuition that
$c_m(s)$ should be positive. But it also has negative values in a certain interval,
which represents  a kind of
repulsion relative to the Poisson spatial process for intermediate values of $r$, and that complicates
the discussion.

However we can justify the intuition for $s$ near $4$:

\begin{prop}  \label{POS} For any $m \geq 3$, there exists $s_m \in [0, 4)$ so that for $s \geq s_m$, $$c_m(s)
=  2m\int_{0}^{\infty} \left[\kappa_{mm}(r)-1\right]r^{2m-1-s}dr > 0 . $$
\end{prop}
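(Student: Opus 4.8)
The plan is to localize the integral near $r = 0$, where the expansion \eqref{leadinga} forces the integrand to be large and positive once $m \ge 3$, and to show this local contribution overwhelms the rest of the integral as $s \uparrow 4$. Concretely, I would fix a small cutoff $\delta_0 \in (0,1]$ and split $c_m(s) = 2m\,(I_0(s) + I_\infty(s))$ with $I_0(s) = \int_0^{\delta_0}[\kappa_{mm}(r)-1]\,r^{2m-1-s}\,dr$ and $I_\infty(s) = \int_{\delta_0}^\infty[\kappa_{mm}(r)-1]\,r^{2m-1-s}\,dr$, then bound $I_0(s)$ from below by a quantity diverging to $+\infty$ as $s \uparrow 4$ and bound $|I_\infty(s)|$ from above uniformly in $s$.

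For the local term, observe that for $m \ge 3$ we have $4 - 2m < 0$ and $8 - 2m > 4 - 2m$, so \eqref{leadinga} reads $\kappa_{mm}(r) - 1 = \tfrac{m+1}{4}\,r^{4-2m}\,(1 + o(1))$ as $r \to 0$. Choose $\delta_0 \in (0,1]$ small enough that $\kappa_{mm}(r) - 1 \ge \tfrac{m+1}{8}\,r^{4-2m}$ on $(0,\delta_0]$. Then for $0 \le s < 4$,
\begin{equation}
I_0(s) \ \ge\ \frac{m+1}{8}\int_0^{\delta_0} r^{4-2m}\, r^{2m-1-s}\,dr \ =\ \frac{m+1}{8}\int_0^{\delta_0} r^{3-s}\,dr \ =\ \frac{m+1}{8}\cdot\frac{\delta_0^{4-s}}{4-s},
\end{equation}
and as $s \uparrow 4$ the factor $\delta_0^{4-s}$ tends to $1$ while $(4-s)^{-1} \to +\infty$, so $I_0(s) \to +\infty$.

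For the tail, I would use that $\int_0^\infty |\kappa_{mm}(r) - 1|\, r^{2m-1}\,dr < \infty$, which is exactly the integrability underlying the $s=0$ case of Theorem \ref{dis} (it follows from the explicit formula \eqref{pointpaircor} of \cite{BSZ3} and the rapid decay of $\kappa_{mm}-1$ at infinity). Since $r \ge \delta_0$ and $0 \le s \le 4$ give $r^{2m-1-s} = r^{2m-1} r^{-s} \le \delta_0^{-4}\, r^{2m-1}$, we get $|I_\infty(s)| \le \delta_0^{-4}\int_{\delta_0}^\infty |\kappa_{mm}(r) - 1|\, r^{2m-1}\,dr =: C < \infty$, independent of $s$. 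Combining the two estimates, $c_m(s) \ge 2m\big(\tfrac{m+1}{8}\,\delta_0^{4-s}(4-s)^{-1} - C\big)$ for $0 \le s < 4$, and the right-hand side tends to $+\infty$ as $s \uparrow 4$; hence it is strictly positive on some interval $[s_m, 4)$ with $s_m \in [0,4)$, which is the assertion. I do not expect a genuine obstacle: the argument is a competition between a divergent local term and a uniformly bounded global term, and the only inputs are the near-origin asymptotics \eqref{leadinga} and the integrability of $\kappa_{mm} - 1$ against $r^{2m-1}$ at infinity, both already available.
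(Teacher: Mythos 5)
Your proof is correct and rests on the same mechanism as the paper's: the near-origin asymptotics \eqref{leadinga} produce a local contribution bounded below by $\frac{m+1}{8}\,\frac{\delta_0^{4-s}}{4-s}$, which blows up to $+\infty$ as $s \uparrow 4$ precisely because the coefficient $\frac{m+1}{4}$ of $r^{4-2m}$ is positive for $m \geq 3$, while the contribution away from $r=0$ stays uniformly bounded in $s \in [0,4]$. The only real difference is that the paper spends most of its effort proving $\kappa_{mm}(r)-1>0$ for $r \geq \sqrt{2m+3}$ (partly to clarify the structure of $\kappa_{mm}$) before invoking the pole at $s=4$, whereas you bypass that sign analysis entirely by bounding the tail in absolute value via $r^{2m-1-s} \leq \delta_0^{-4} r^{2m-1}$ together with the decay \eqref{leading2} --- a legitimate streamlining that renders the tail-positivity claim unnecessary for this proposition.
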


We do not try to find a good value of $s_m$, which probably requires a computer assisted
calculation, but are content with a simple argument that nevertheless clarifies the structure
of $\kappa_{mm}(r)$.

\subsection{The higher codimension case}

The methods extend readily to systems of $k \leq m$ polynomials (or sections), whose zero set is a complex
submanifold of codimension $k$, so we also include this case.  If  $A\subset M$ is a complex subvariety of codimension $k\leq m-1$,  the Riesz functions have no point mass on the diagonal, and we define the continuous
Riesz energies as, \begin{equation}\label{def2}
\mathcal E_{s}(A)=\int\int_{A\times A} G_s(z,w)dV_k(z)dV_k(w)
 \end{equation}where  $dV_k=\frac{\omega^{m-k}}{(m-k)!}$ is the volume form on $A$.

The zero locus  $Z_{s_1^N,\cdots, s^N_k}$  of $k$  holomorphic random sections is an analytic subvariety of codimension $k$ where $k<m$.
We  define,
\begin{equation}\label{defsd2}
E_{\mu^N_h}\mathcal E_{s}=\int_{(H^0(M,L^N)^k}\mathcal E_{s}(Z_{s_1^N,\cdots, s^N_k})d\mu_h^N(s)=\int_{(\C^{d_N})^k}\mathcal E_{s}(Z_{s_1^N,\cdots, s^N_k})\frac{e^{-|c|^2}}{\pi^{d_N}}dc,
\end{equation}
where \begin{equation}\label{msthi}\mathcal E_{s}(Z_{s_1^N,\cdots, s^N_k})=\int_{Z_{s_1^N,\cdots, s^N_k}\times Z_{s_1^N,\cdots, s^N_k}}G_s(z,w)dV_k(z)dV_k(w)\end{equation}
We then have,

\begin{thm}  \label{con} With the same  assumptions as in Theorem \ref{dis},
\begin{itemize}
\item If $0< s<2(m-k)$, we have,
\begin{align*}E_{\mu_h^N}\mathcal E_{s}=&\left(\frac{\pi^{m-k}}{(m-k)!}\right)^2\left[a_1(h,s)N^{2k}+\sum_{j=2}^{p-1}w_j(k)a_j(h,s)N^{2k-j}\right]\\&+d_m(k,s)N^{2k-m+\frac{s}{2}}+O(N^{2k-m+\frac{s-1}{2}}(\log N)^{m-\frac{s}{2}})\end{align*} where $p =[m-\frac{s}{2}]+1$ and $d_m(k,s)$ is the integral defined in (\ref{eq:newejffffjjjq}).

 \item If $s=0$, we have,\begin{align*}E_{\mu_h^N}\mathcal E_{0}&=\left(\frac{\pi^{m-k}}{(m-k)!}\right)^2 \left[a_1(h,0)N^{2k}+w_2(k)a_2(h,0)N^{2k-2}+\cdots+w_m(k)a_m(h,0)N^{2k-m}\right]\\& -d_m(k)N^{2k-m}+e(k)N^{2k-m }\log \sqrt N
+O(N^{2k-m-\frac{1}{2}}(\log N)^{m+1})\end{align*} where  $d(k)$ and $e(k)$ are constants depending on $k$.
 \end{itemize}
where $a_j(h,s)$ are defined in Theorem \ref{dis} and $w_j(k)$ is the weight, in particular, $w_2(k)=\frac k m$.

\end{thm}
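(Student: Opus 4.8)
The plan is to mirror the structure of the proof of Theorem \ref{dis} (the discrete case), replacing the zero-point process by the integration current on the codimension-$k$ zero variety. First I would write the expected energy as
\begin{equation}\label{eq:plan1}
E_{\mu_h^N}\mathcal E_{s}=\int_{M\times M}G_s(z,w)\,\widetilde K^N_k(z,w)\,d\tilde V(z)\,d\tilde V(w),
\end{equation}
where $\widetilde K^N_k(z,w)$ is the (suitably normalized) pair correlation current of the zero variety $Z_{s_1^N,\dots,s_k^N}$, i.e. the expectation of $dV_k(z)\otimes dV_k(w)$ restricted to the product of the zero sets. This is the codimension-$k$ analogue of the Kac--Rice / Bleher--Shiffman--Zelditch pair correlation density, and its existence and formula in terms of the Szeg\H{o} kernel are exactly what is recalled in \S\ref{PCC}; away from the diagonal it converges, after rescaling, to the universal Bargmann--Fock pair correlation $\kappa_{mk}$, while its diagonal/near-diagonal behavior is governed by the off-diagonal decay of the normalized Szeg\H{o} kernel $\frac{|S_N(z,w)|}{\sqrt{S_N(z,z)S_N(w,w)}}\sim e^{-N|z-w|^2/2}$ in Bargmann--Fock coordinates.

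Next I would split the integral in \eqref{eq:plan1} into a ``far'' region $\{r_g(z,w)\geq N^{-1/2}\log N\}$ and a ``near'' region $\{r_g(z,w)< N^{-1/2}\log N\}$, just as in the discrete case. On the far region, the Tian--Yau--Zelditch expansion of the Szeg\H{o} kernel on the diagonal and its near-diagonal decay give an asymptotic expansion of $\widetilde K^N_k(z,w)$ in powers of $N$ whose top term is $N^{2k}\cdot d\tilde V(z)d\tilde V(w)$ times the constant $(\pi^{m-k}/(m-k)!)^2$ coming from the volume normalization $dV_k=\omega^{m-k}/(m-k)!$ versus $d\tilde V=(\omega/\pi)^m$; the subleading terms produce the curvature integrals $a_j(h,s)$ already identified in Lemma \ref{k1}, now multiplied by combinatorial weights $w_j(k)$ (with $w_2(k)=k/m$ coming from the trace of the Hessian of $\log S_N$ restricted to the $k$ normal directions). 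This is where I would invoke Lemma \ref{k1} essentially verbatim, only tracking the extra $w_j(k)$ factors. On the near region, one rescales $z-w = u/\sqrt N$ and replaces $\widetilde K^N_k$ by its Bargmann--Fock scaling limit $\kappa_{mk}(|u|)$; since $dV_k(z)dV_k(w)$ contributes $N$ to some power and $G_s(z,w)=r_g^{-s}\sim (|u|/\sqrt N)^{-s}$, the change of variables extracts the power $N^{2k-m+s/2}$, and the coefficient is the convergent radial integral $d_m(k,s)=2m\int_0^\infty[\kappa_{mk}(r)-1]r^{2m-1-s}\,dr$ (the $-1$ being the subtraction of the already-counted Poisson piece), i.e. the quantity called out as (\ref{eq:newejffffjjjq}); convergence at $r\to 0$ needs the small-$r$ asymptotics of $\kappa_{mk}$ and is where the hypothesis $s<2(m-k)$ enters. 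The logarithmic case $s=0$ is handled identically except that the scaling substitution turns $-\log r_g$ into $-\log|u|+\tfrac12\log N$, which splits off the $e(k)N^{2k-m}\log\sqrt N$ term and leaves the convergent integral $d_m(k)=2m\int_0^\infty\log r\,[\kappa_{mk}(r)-1]r^{2m-1}\,dr$ for the $N^{2k-m}$ coefficient.

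Finally I would assemble the pieces and bound the error terms: the transition-region and remainder estimates come from (i) the uniform remainder in the Szeg\H{o} expansion, (ii) the exponential decay of the off-diagonal Szeg\H{o} kernel which makes the contribution of the annulus $\{N^{-1/2}\log N\le r_g\le \delta\}$ negligible beyond the listed orders, and (iii) a crude bound $\widetilde K^N_k(z,w)=O(N^{2k})$ valid globally to control the ``far'' integral of the Riesz singularity, which is integrable precisely because $s<2(m-k)\le 2m$. The main obstacle I anticipate is not the far region (that is Lemma \ref{k1} plus bookkeeping) but the \emph{near-diagonal} analysis in codimension $k>1$: one must show that the rescaled pair correlation current converges to $\kappa_{mk}$ in a strong enough sense (e.g. in $L^1_{loc}$ against $r^{2m-1-s}\,dr$, uniformly in $N$) and that the error in replacing $\widetilde K^N_k$ by its scaling limit on the ball of radius $N^{-1/2}\log N$ is genuinely of the claimed order $O(N^{2k-m+(s-1)/2}(\log N)^{m-s/2})$. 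This requires quantitative control of the next term in the scaling expansion of the pair correlation current, which in the discrete case is available from \cite{BSZ1,BSZ2,BSZ3}; extending those estimates uniformly to all codimensions $k\le m$, and checking the small-$r$ behavior of $\kappa_{mk}$ that guarantees integrability of $d_m(k,s)$ and $d_m(k)$, is the technical heart of the argument.
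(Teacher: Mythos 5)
Your proposal is correct and follows essentially the same route as the paper: the Poisson piece $E^N_{km}\otimes E^N_{km}$ integrated over all of $M\times M$ yields the $a_j(h,s)$ terms with weights $w_j(k)$ (the codimension-$k$ analogue of Lemma \ref{k1}), the correction $K^N_{km}-E^N_{km}\otimes E^N_{km}$ is split at $r_g\sim b\sqrt{\log N/N}$ with the far part killed by the off-diagonal estimate \eqref{of2f} and the near part rescaled to the universal limit $\kappa_{km}$, and the hypothesis $s<2(m-k)$ enters exactly as you say, through $\kappa_{km}(r)\sim r^{-2k}$ at the origin. The only discrepancy is cosmetic: the paper's $d_m(k,s)$ as defined in \eqref{eq:newejffffjjjq} carries the extra normalization factor $\left(\frac{\pi^{m-k}}{(m-k)!}\right)^2$ coming from $dV_k$, which your explicit formula omits.
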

\begin{rem}Note that conditions of $s$ in the  two theorems are different. This  is  because the behavior
of the  pair correlation functions of zeros   near the origin are different when $k\neq m$ and $ k=m$ respectively.
\end{rem}

\subsection{Fekete points}

Although we do not have any results on it, we should point out that the separation and equidistribution of points
in dimensions $m > 1$ are often measured by the energies of Leja and Zaharjuta,
 \begin{equation} \label{DELTA} \log \Delta_N(z_1, \dots, z_{d_N}) = \log \det \left(B_N(z_i, z_j) \right) \end{equation}  where $B_N$ is the Bergman kernel for degree $N$
polynomials (the kernel of the orthogonal projection onto the degree $N$ polynomials).  Here, $d_N $ is
the dimension of the space of polynomials of degree $N$.  In dimension one,
this is a Vandermonde type determinant whose logarithm  is closely related to the Green's energy of the
configuration. But in higher dimensions, the Green's energy and  $ \log  \det \left(B_N(z_i, z_j) \right)$  are no longer
known to be closely related and probabaly are not.
Moreover, one is forced to choose different numbers of points for the two different energies:  for \eqref{DELTA}
one chooses $d_N$ points,
but in the Green's energy problem in the point case the  number of zeros of the system is $c_1(L^N)^m > d_N$.
We refer to \cite{BBWN} for recent result on Fekete points in higher dimensions.

\subsection{Acknowledgements}

Finally, we thank B. Shiffman for helpful comment and for the computer graphics.

\section{Background}
In this section, we recall some definitions in \kahler geometry \cite{GH, Ze} and some properties about
zeros of random holomorphic sections of positive holomorphic line bundles \cite{BSZ1, BSZ2, BSZ3}.
\subsection{\kahler geometry}\

Let $(M,\omega)$ be an $m$-dimensional polarized compact \kahler manifold. Then there exists a polarization of $(M,\omega)$, i.e., there exists a positive holomorphic line bundle $(L, h)$ with smooth
Hermitian metric $h$ such that the
curvature form
$$\label{curvature}\Theta_h=-\ddbar
\log\|e_L\|_h^2\;,$$
satisfies the following condition $$\omega=\frac{\sqrt{-1}}{2}\Theta_h$$   Here, $e_L$ is a local non-vanishing
holomorphic section of $L$ over an open set $U\subset M$, and
$\|e_L\|_h=h(e_L,e_L)^{1/2}$ is the $h$-norm of $e_L$ \cite{GH}.

We denote by $H^0(M, L^{N})$ the space of global holomorphic
sections of
$L^N=L\otimes\cdots\otimes L$.   The metric $h$ induces Hermitian
metrics
$h^N$ on $L^N$ given by $\|s^{\otimes N}\|_{h^N}=\|s\|_h^N$.  We give
$H^0(M,L^N)$ the Hermitian inner product \eqref{innera}.

We define the Bergman kernel as the orthogonal projection from the $L^2$ integral sections to the holomorphic sections:
\begin{equation}\Pi_{N}(z,w): L^{2}(M,L^{N})\rightarrow H^{0}(M,L^{N})\end{equation}
with respect to the inner product $\langle \cdot, \cdot \rangle_{h^N}$.

Denote $\{S_1^N,\cdots, S_{d_N}^N\}$ as the orthonormal basis of $H^0(M,L^N)$ with respect to the
 inner product $\langle \cdot, \cdot \rangle_{h^N}$ where $d_N=\dim H^{0}(M,L^{N})$. Then along the diagonal, we have the following complete asymptotics \cite{L, Ze},
\begin{equation}\label{seg} \Pi_{N}(z,z)=\sum_{j=1}^{d_N}\|S_j^N\|^2_{h^N}=N^m+a_1(z)N^{m-1}+a_2(z)N^{m-1}\cdots
\end{equation} where each coefficient
$a_j (z)$ is a polynomial of the curvature and its covariant derivatives.  In particular, $a_1(z)$ is the scalar curvature of $\omega$.

\begin{example}In the case of $(\CP^m, \omega_{FS})$ with the line bundle $(\mathcal O(N), h_{FS})$ which is the $N$th power of the hyperplane line bundle $\mathcal O(1)$, the Bergman kernel is a constant along the diagonal \cite{BSZ2},
\begin{equation} \label{cp}\Pi_N(z,z)=\frac{(N+m)!}{N!}
\end{equation}
\end{example}
\subsection{Zeros of Random sections}\label{zeross}\

For Gaussian random holomorphic sections $s^N_1(z),\cdots, s^N_k(z)\in ((H^0(M, L^N))^k,d\mu^N_h)$ with $k\leq m$, we
denote by \begin{equation}Z_{s^N_1,\cdots,s^N_k}:=\left\{z\in M: s^N_1(z)=\cdots =s^N_k(z)=0\right\}.\end{equation}  It is a subvariety of codimension $k$.  The current of integration over the zero locus is defined by
\begin{equation}\left(Z_{s_1^N,\cdots,s_k^N},\phi\right)=\int_{Z_{s^N_1,\cdots,s^N_k}}\phi(z),\,\,\, \phi\in \mathcal{D}^{m-k,m-k}(M),\end{equation}
which we also write as
\begin{equation}\left(Z_{s^N_1,\cdots,s^N_m},\phi\right)=\int_{M}Z_{s_1^N,\cdots,s_k^N}\wedge \phi(z),\,\,\, \phi\in \mathcal{D}^{m-k,m-k}(M)\end{equation}
For the special point case of $k=m$,
\begin{equation}\left(Z_{s^N_1,\cdots,s^N_m},\phi\right)=\sum_{s^N_1(z)=\cdots =s^N_m(z)=0}\phi(z),\,\,\, \phi\in C(M).\end{equation}

We now equip the vector space $H^0(M,L^N)$ with the complex
Gaussian probability measure \eqref{HGM}  induced by $h$.
This
Gaussian
is characterized by the property that the $2d_N$ real variables $\Re
c_j,
\Im c_j$ ($j=1,\dots,d_N$) are independent random variables with mean 0
and
variance $\frac{1}{2}$; i.e.,
\begin{equation}E c_j = 0,\quad E c_j c_k = 0,\quad  E c_j \bar c_k = \delta_{jk}\,.\end{equation}
where $E$ denotes the expectation

We denote by  $Z_{s^N}$ the current of integration over the  zeros locus of a single section.
By the  Poincar\'{e}-Lelong formula. Then we have \cite{SZ1},
\begin{equation}\label{pl}E(Z_{s^N})=\frac{\sqrt {-1}}{2\pi}\partial\bar\partial\log \Pi_N+\frac{N}{\pi}\omega\end{equation}
It follows by \eqref{seg} that
\begin{equation}E(\frac{1}{N}Z_{s^N})\rightarrow \frac{\omega}{\pi}\,\,\, \mbox{as}\,\, \,N\rightarrow \infty\end{equation}

Define \begin{equation}\label{denote}E^N_{km}:=E(Z_{s^N_1,\cdots,s^N_k}),\,\,\, k\leq m\end{equation} as the expectation of zeros locus of $k$ random sections. Then  we have \cite{SZ1},
\begin{equation}\label{to}E^N_{km}=\left(E(Z_{s^N})\right)^k=\left(\frac{i}{2\pi}\d\dbar \log\Pi_N(z,z)+\frac{N}{\pi}\omega\right)^k.\end{equation}
By \eqref{seg} we get,
 \begin{equation}\label{tro}E(Z_{s^N_1,\cdots,s^N_k})=\frac{N^k}{\pi^k}\omega^k+O(N^{m-1})\end{equation}
for $N$ large enough.
As a direct consequence, we know the expected number of discrete zeros is $r_N=(c_1(L^N))^m$ when $k=m$.

\begin{example}In the case $\mathcal O(N)\to \CP^m$ equipped with the Fubini-Study metric,
$H^0(\CP^m, \mathcal O(N))$
is identified with the space of polynomials of degree $N$. We equip $H^0(\CP^m, \mathcal O(N))$ with complex Gaussian ensemble induced by Fubini-Study metric. Since the Bergman kernel is constant (\ref{cp}),
by Poincar\'{e}-Lelong formula (\ref{pl}), the distribution of zeros of Gaussian random polynomials $p^N$ satisfies the exact formula,
\begin{equation}E(\frac{1}{N}Z_{p^N})= \frac{\omega_{FS}}{\pi}\end{equation}
as $N$ large enough. And the expected number of zeros is $N^m$.
\end{example}

\subsection{\label{PCC} The pair correlation current}\

The pair correlation current is defined by,
 \begin{equation} \label{KN} K_{km}^N(z,w):=E\left(Z_{s^N_1,\cdots,s^N_k}(z)\otimes Z_{s^N_1,\cdots,s^N_k}(w)\right)\in \mathcal D^{2k,2k}(M\times M)\end{equation}
in sense that for any test form $\phi_1(z)\otimes\phi_2(w)\in \mathcal D^{m-k,m-k}(M)\otimes \mathcal D^{m-k,m-k}(M) $, we have the pairing,
\begin{equation}\left(K_{km}^N(z,w),\phi_1(z)\otimes\phi_2(w)\right)=E\left[\left(Z_{s^N_1,\cdots,s^N_k},\phi_1\right)\left(Z_{s^N_1,\cdots,s^N_k},\phi_2\right)\right]\end{equation}
In the special case when $k=m$, the correlation measures take the form
\begin{equation}\label{oxf2f}K_{mm}^N(z,w)=[\triangle]\wedge \left( E^N_{mm}(z)\otimes 1 \right)+\kappa^N(z,w)\omega_z^m\otimes\omega_w^m\end{equation}
where $[\triangle]$ denotes the current of the integration along the diagonal and $\kappa^N \in C^\infty(M\times M)$ and
$E_{mm}^N$ is the expectation $E(Z_{s^N_1,\cdots,s^N_m})$.

The correlation currents of two points $K_{km}^N$ are smooth forms away from the diagonal in $M\times M$ and has no mass on the diagonal
for $k<m$; in the case of $k=m$, $K_{mm}^N$ contains a delta-function along the diagonal \cite{SZ1}.

If $1\leq k\leq m-1$, the correlation current for $K_{km}^N$ is an $L^1$ current on $M\times M$ given by
\begin{equation}K_{km}^N=\left(-\partial_1\bar\partial_1\partial_2\bar\partial_2 Q_N+EZ_{s^N}\otimes EZ_{s^N}\right)^k\end{equation}
where $Q_N$ is the pluri-bipotential defined in \cite{SZ1}.

 In the case of $k=m$, \begin{equation}\label{off} K_{mm}^N=\left(-\partial_1\bar\partial_1\partial_2\bar\partial_2 Q_N+EZ_{s^N}\otimes EZ_{s^N}\right)^m|_{M\times M \setminus \bigtriangleup}+diag_* \left(EZ_{s^N} \right)^m\end{equation} where $\triangle$ is the diagonal in $M\times M$ and $diag: M\to M\times M$ is the diagonal map $diag(z)=(z,z)$.

 Furthermore, the pair correlation currents has the off-diagonal asymptotics \cite{SZ1},
 \begin{equation}\label{of2f}K_{km}^N=\left(EZ_{s^N}\otimes EZ_{s^N}\right)^k+O(N^{-q}), \,\,\, \mbox{for}\,\, r_g(z,w) \geq b\sqrt{\frac{\log N}{ N}}\end{equation}
where $b>\sqrt{q+2k+3}\geq 3$.

\subsubsection{ Universal rescaling property}\label{rescale}\

 We now   re-scale the correlation functions around a point $z_0 \in M$  in
normal coordinates by a factor of $\sqrt N$. In \cite{BSZ2}, the authors prove that the pair correlation currents have the universal rescaling asymptotics,

 \begin{equation}\label{current}K_{km}^N(\frac{z}{\sqrt N},\frac{w}{\sqrt N})
 = K^\infty_{km}(z, w)\left(\frac{i}{2\pi}\d\dbar |z|^2\right)^k\wedge \left(\frac{i}{2\pi}\d\dbar |w|^2\right)^k+O(\frac{1}{\sqrt N})\end{equation}
 where the error term is  $\frac{1}{\sqrt N}$ times a smooth volume form. The universal rescaling property is independent of the manifold $M$ and the line bundle $L$;
$ K^\infty_{km}$ depends only on the dimension
$m$ of the manifold and the codimension $k$ of the zero set. Furthermore, $K^\infty_{km}(z,w)$ is a function depending on the distance $r_g(z,w)$ and thus we rewrite \cite{BSZ3}, \begin{equation}\label{kappa}K^\infty_{km}(z, w)=\kappa_{km}(r),\,\,\,  r = |z - w|, \;\;1\leq k\leq m\end{equation}

The universal scaling limit $\kappa_{km}(r)$ is explicitly computed in \cite{BSZ2, BSZ3}.
In the case when $k =m$, one has the rather complicated formula ($v=e^{-r^2}$)
\begin{equation} \begin{array}{r}\kappa_{mm}(r) = \frac{m (1-{v}^{m+1} ) (1-v )+r^2(2m+2) ({v}^{m+1}-v )+r^4\left[{v}^{m+1}+{v}^{m}+ ( \{m+1\}v+1 )(v^m-v)/(v-1)\right]}{m (1-v)^{m+2}}\,,\\[8pt]  \end{array}\label{pointpaircor}\end{equation}
For small values of $r$, we have
\begin{equation} \label{leading} \kappa_{mm}(r)= \frac{m+1}{4}r^{4-2m} + O(r^{8-2m})\,,\qquad\mbox{as }\ r\to 0\,.\end{equation}
As mentioned above, the $\kappa_{11}(r) \sim r^2$ as $r \to 0$, exhibiting repulsion,
while $\kappa_{22}(r) \to \frac{3}{4}$ when $m = 2$. For $m \geq 3$, $\kappa_{mm}(r)$ blows up as $r \to 0$.

In the higher codimension case $k\neq m$,  \begin{equation}\label{subvvv} \kappa_{km}(r)\sim r^{-2k}, \,\,\,\,\mbox{as}\,\,\,\, r\to 0
\end{equation}
In both cases,
\begin{equation} \label{leading2} \kappa_{km}(r)=1 + O(r^4e^{-r^2})\,,\qquad\mbox{as }\ r\to \infty\,.\end{equation}
for any  $1\leq k\leq m$. Thus, the correlations are very short range on the $\frac{1}{\sqrt{N}}$
length scale. The constant $1$ would be the
correlation function for the Poisson spatial process (i.e. independent points chosen at random
from the normalized volume form).
Below are computer graphics of $\kappa_{mm}(r)$ in the cases $m =2,  3, 4$.
\begin{center} \label{m=2} 
\includegraphics[scale=0.5]{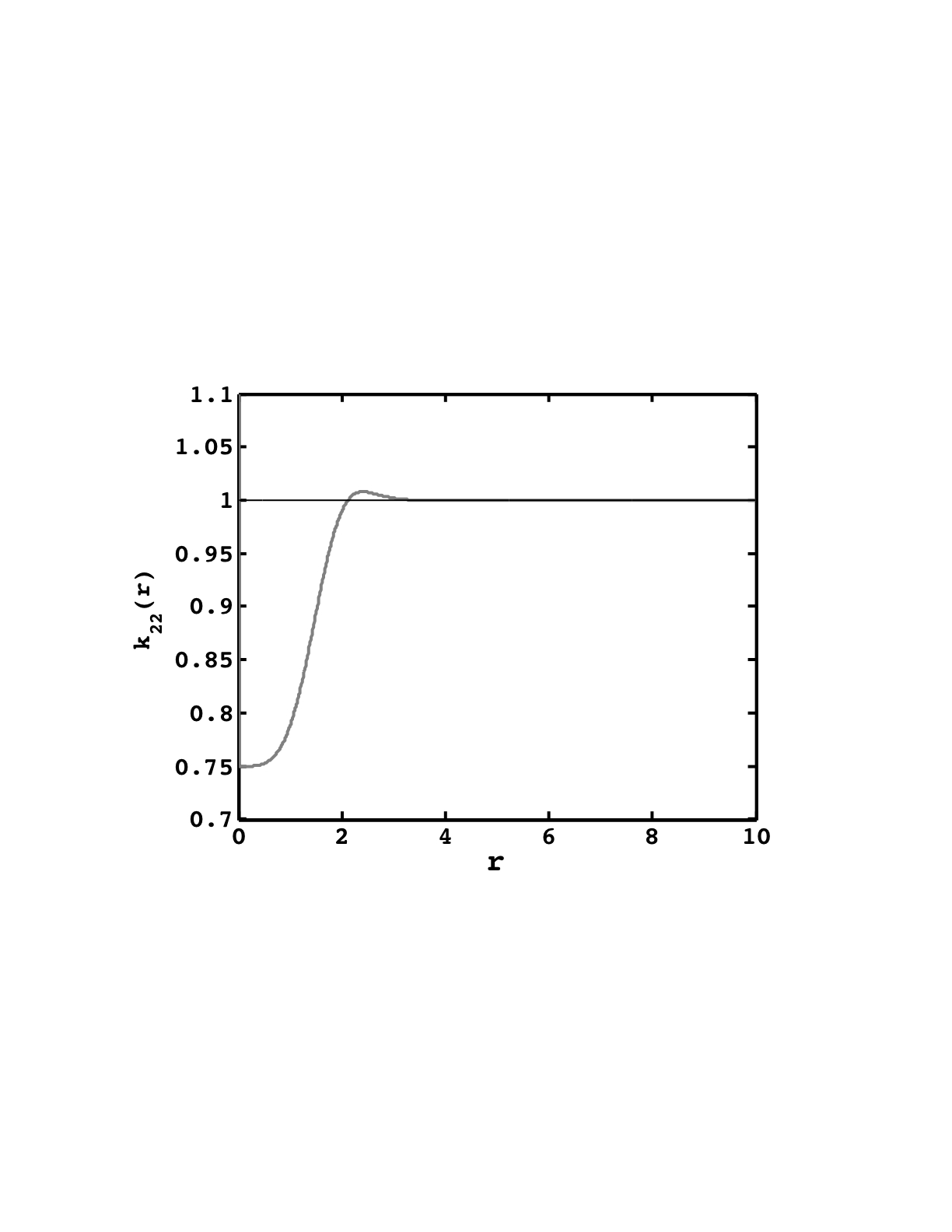} 
\text{m=3,4}\includegraphics[scale=0.5]{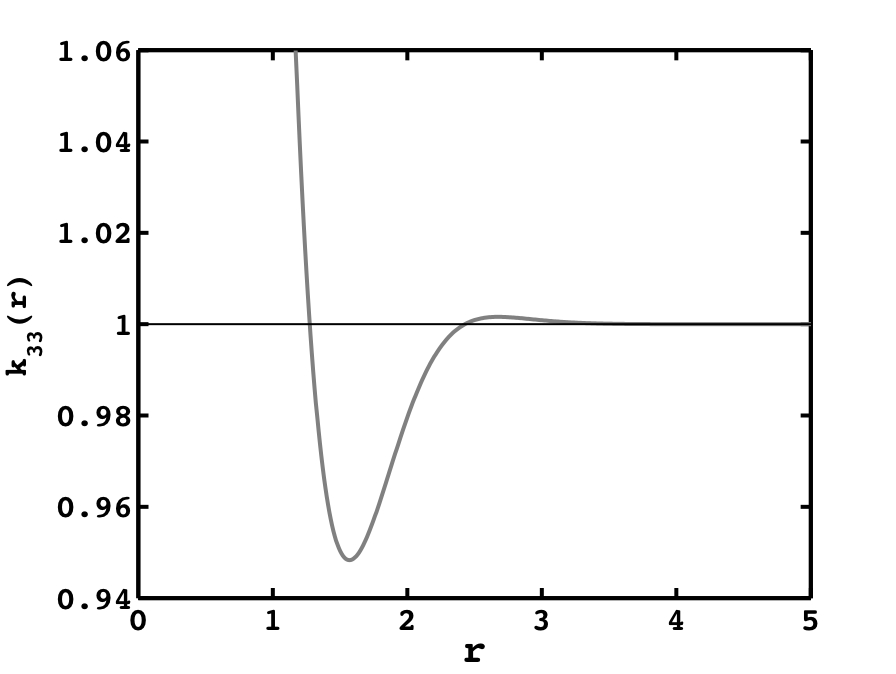}\includegraphics[scale=0.5]{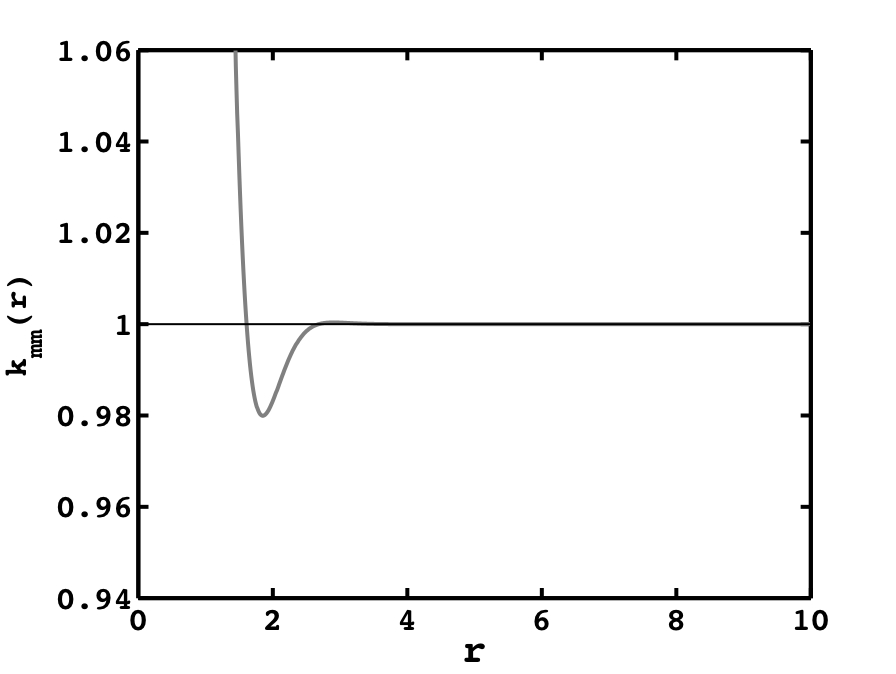}
 \end{center}

From the computer graphic. one sees that $\kappa_{33}(r) - 1$ and $\kappa_{44} - 1$ each
two zeros, both less than $4$, and each is positive elsewhere. In Proposition \ref{POS}  we show that $k_{mm}(r) - 1 > 0$
when $m \geq 3$ 
for $r^2 \geq 2m + 3$, but almost certainly it becomes positive at much smaller values of $r$.

\subsection{\label{CONFIG} Configuration spaces, empirical measures and random points}

Generic configurations of zeros $\zeta = \{\zeta^1, \dots, \zeta^{r_N}\}$ in the discrete (point) case are points in the punctured product
\begin{equation}\label{MrN}M_{r_N}=\{(\zeta^1,\dots,\zeta^{r_N})\in
\underbrace{M\times\cdots\times M}_{r_N}: \zeta^p\ne \zeta^q \ \ {\rm for} \
p\ne q\}/S_{r_N}, \end{equation}
where $r_N = (c_1(L^N))^m$ is the number of zeros of the system and $S_{r_N}$ is the symmetric
group on $r_N$ letters.  As in the case $m = 1$,
one has a map
$$\mu_{\zeta}:  M_{r_N} \to {\mathcal P}_M, \;\;\;\; \mu_{\zeta} = \frac{1}{r_N} \sum \delta_{\zeta_j}$$
from configurations to probability measures on $M$. We also have the  map \eqref{ZEROSMAP}
$$(s^N_1,\cdots,s^N_m) \in (H^0(M, L^N) )^m \to Z_{{s^N_1,\cdots,s^N_m}} \in M_{r_N} $$
from polynomial systems to their zeros, and the composition gives the empirical measure of zeros map
$$ (s^N_1,\cdots,s^N_m) \to \mu_{\zeta}. $$
The Gaussian product probability measure on $ (H^0(M, L^N) )^m$ pushes forward to a probability
measure on $M_{r_N}$ and the two-point correlation function \eqref{KN} is the expected value of
$\mu_{\zeta} \otimes \mu_{\zeta}$ with respect to this pushforward.  Note that the image in
$M_{r_N}$ of the zeros map \eqref{ZEROSMAP} has positive codimension growing linearly in $N$,
and therefore the induced measure on $M_{r_N}$ is very singular.

 By comparison, just choosing $r_N$ random points on $M$ from the normalized volume form \eqref{TILDEV}
corresponds to product measures on $M_{r_N}$, known as the binomial point process.
If we let $N \to \infty$ and rescale as above, we get the Poisson point process on $\C^m$ with intensity
given by the Euclidean measure. Its correlation function is then the constant function $1$. Hence
$\kappa_{mm} - 1$ repressents the deviation of the zero point correlations from the Poisson case.

The Riesz energies of configurations \eqref{def}  are functions on $M_{r_N}\times M_{r_N}$ which blow up on the `large diagonals'
where some pair $\zeta^i = \zeta^j$ ($i \not= j$).  For polynomial systems, this large diagonal
defines the discriminant variety $\dcal_{m, N}$ of full polynomial systems of degree $N$ in $m$
variables which have a double root.  Thus $\ecal_s$  defines  a simple  notion  of  `inverse
distance to $\dcal_{m, N}$ '.  The large expected value for $\ecal_s$ in dimensions $m \geq 3$ thus show
that configurations of zeros are often close to $\dcal_{m, N}$ by comparison with random points.

\section{Proof of Theorem \ref{dis}}
Now we compute the Riesz energy of the discrete set $Z_{s_1^N,\cdots,s_m^N}$ which is the zero set of
$m$ random sections. In the proof, $c$ is a constant, but may be different from line to line.  First we need the following Lemma,
\begin{lem}\label{k1}We have complete asymptotics,
\begin{equation}\label{ddd}\begin{aligned}&\int_{M\times M}G_s(z,w)E_{mm}^N(z)\wedge E_{mm}^N(w)\\&=a_1(h,s)N^{2m}+a_2(h,s)N^{2m-2}+a_3(h,s)N^{2m-3}+\cdots\end{aligned}\end{equation}
where each coefficient $a_j(h, s)$ is a constant depending on $s$ and the curvature of $h$, in particular, the first two terms are given in (\ref{coe}).
\end{lem}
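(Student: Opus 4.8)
The plan is to expand $E_{mm}^N(z)\wedge E_{mm}^N(w)$ as a polynomial in $N$ with smooth-form coefficients, then integrate each coefficient against $G_s(z,w)$. By \eqref{to} we have $E_{mm}^N = \left(\frac{i}{2\pi}\ddbar\log\Pi_N(z,z) + \frac{N}{\pi}\om\right)^m$, and by the Tian--Yau--Zelditch expansion \eqref{seg}, $\Pi_N(z,z) = N^m\bigl(1 + a_1(z)N^{-1} + a_2(z)N^{-2} + \cdots\bigr)$ where the $a_j(z)$ are universal curvature polynomials. Taking $\log$ gives $\log\Pi_N(z,z) = m\log N + \log\bigl(1 + a_1(z)N^{-1}+\cdots\bigr)$, and since $\ddbar(m\log N)=0$, the term $\frac{i}{2\pi}\ddbar\log\Pi_N$ has a complete asymptotic expansion $\sum_{j\ge 1} b_j(z)N^{-j}$ where $b_1 = \frac{i}{2\pi}\ddbar a_1(z) = \frac{i}{2\pi}\ddbar S(z)$ (using that $a_1$ is the scalar curvature, up to normalization) and each $b_j$ is a smooth $(1,1)$-form built from curvature. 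Thus
\begin{equation}\label{eq:Emm-expand}
E_{mm}^N(z) = \left(\frac{N}{\pi}\om(z) + b_1(z)N^{-1} + b_2(z)N^{-2} + \cdots\right)^m.
\end{equation}

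Next I would expand the $m$th power in \eqref{eq:Emm-expand} using the multinomial/binomial theorem for wedge products of commuting even-degree forms. The leading term is $\frac{N^m}{\pi^m}\om^m = N^m\, d\tilde V(z)$, and the next contribution comes from replacing one factor of $\frac{N}{\pi}\om$ by $b_1 N^{-1}$, giving $m N^{m-1}\cdot N^{-1}\cdot \frac{\om^{m-1}}{\pi^{m-1}}\wedge b_1 = N^{m-2}\cdot(\text{smooth }2m\text{-form})$; note there is a gap at order $N^{m-1}$ because the correction to $\frac{N}{\pi}\om$ starts at $N^{-1}$, which is why the expansion \eqref{ddd} skips the $N^{2m-1}$ term. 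Collecting powers, $E_{mm}^N(z) = \sum_{j\ge 0} c_j(z) N^{m-j}$ with $c_0 = d\tilde V$, $c_1 = 0$, $c_2 = m\,\frac{\om^{m-1}}{\pi^{m-1}}\wedge b_1 = m\,\frac{\om^{m-1}}{\pi^{m-1}}\wedge \frac{i}{2\pi}\ddbar S$, etc. Then
\begin{equation}\label{eq:product-expand}
E_{mm}^N(z)\wedge E_{mm}^N(w) = \sum_{j,l\ge 0} c_j(z)\wedge c_l(w)\, N^{2m-j-l},
\end{equation}
and integrating term by term against $G_s(z,w)$ yields \eqref{ddd} with $a_1(h,s) = \int_{M\times M} G_s(z,w)\,d\tilde V(z)\,d\tilde V(w)$ (reproducing the first line of \eqref{coe}) and $a_2(h,s) = 2\int_{M\times M} G_s(z,w)\, c_2(z)\wedge d\tilde V(w)$, which after using $c_2 = m\,\frac{\om^{m-1}}{\pi^{m-1}}\wedge\frac{i}{2\pi}\ddbar S$ and an integration by parts (moving $\ddbar$ onto $G_s$, or recognizing $\frac{\om^{m-1}}{\pi^{m-1}}\wedge\frac{i}{2\pi}\ddbar S = \frac{1}{m}\Delta_\om S\,d\tilde V$ up to sign conventions) should match $-2\int_{M\times M} G_s(z,w)\Delta_\om S(z)\,d\tilde V(z)\,d\tilde V(w)$ in \eqref{coe}. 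The $a_j$ are visibly independent of $N$ and are integrals of $G_s$ against curvature invariants of $h$.

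The one genuine issue to address is \emph{convergence of the term-by-term integration}: $G_s(z,w)$ is singular on the diagonal (like $r^{-s}$, or $\log r$ when $s=0$), so each integral $\int_{M\times M} G_s(z,w)\,c_j(z)\wedge c_l(w)$ must be shown to converge, and one must justify that the asymptotic expansion \eqref{eq:Emm-expand}--\eqref{eq:product-expand} may be integrated against $G_s$ with a uniformly controlled remainder. The first point is fine since $s < 2m$ (the hypothesis ensuring $r^{-s}$ is locally integrable in real dimension $2m$) and the $c_j\wedge c_l$ are bounded smooth forms. For the second, the TYZ expansion \eqref{seg} holds in the $C^\infty$ topology with remainder estimates uniform on $M$, so truncating \eqref{eq:Emm-expand} at order $N^{m-p}$ leaves a remainder that is $O(N^{m-p})$ times a fixed smooth form uniformly; pairing the product remainder against the integrable kernel $G_s$ then gives a genuine $O(N^{2m-p})$ error, establishing the \emph{complete} asymptotic expansion. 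This uniformity of the Bergman kernel expansion is the only step requiring care; everything else is multilinear algebra of forms and integration by parts. Finally, for the Fubini--Study case (Corollary \ref{1}) one simply notes $\Pi_N(z,z)$ is constant by \eqref{cp}, so $\ddbar\log\Pi_N \equiv 0$, all $b_j$ vanish, hence all $c_j$ with $j\ge 1$ vanish and $E_{mm}^N = N^m\,d\tilde V$ exactly, forcing $a_j(h_{FS},s)=0$ for $j\ge 2$.
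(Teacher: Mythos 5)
Your proposal is correct and follows essentially the same route as the paper: expand $\log\Pi_N(z,z)$ via the Tian--Yau--Zelditch expansion \eqref{seg}, take the $m$th wedge power of $\frac{N}{\pi}\om + \frac{i}{2\pi}\ddbar\log\Pi_N$ to get $E^N_{mm}(z)=N^m(\frac{\om}{\pi})^m + O(N^{m-2})$ with curvature-polynomial coefficients, and integrate term by term against $G_s$ using $G_s\in L^1(M\times M)$ for $0\le s<2m$ and the identity $m(i\ddbar S)\wedge\om^{m-1}=(\Delta_\om S)\om^m$ to identify \eqref{coe}. Your added remarks on the uniformity of the remainder in the Bergman kernel expansion make explicit a point the paper leaves implicit, but the argument is the same.
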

\begin{proof}
From (\ref{to}), $E^N_{mm}(z)$ which is the expectation of discrete zeros of $m$ random sections is given by the explicit formula,
\begin{equation}
E^N_{mm}(z)=\left(\frac{i}{2\pi}\partial\bar\partial\log \Pi_N(z,z)+\frac
N\pi\omega(z)\right)^m,
\end{equation}
First by \eqref{seg},  we have,  \begin{equation}
\log\Pi_N(z,z)=m\log
N+a_1(z)N^{-1}+\left[a_2(z)-\frac{1}{2}a_1^2(z)\right]N^{-2}+\cdots
\end{equation} where $a_1=S(z)$ is the scalar curvature of $\omega$ and each $a_j(z)$ is a polynomial of curvature and its covariant derivative.  Thus we get the following complete asymptotics of $E^N_{mm}(z)$, \begin{equation}\label{K1}\begin{aligned}
E^N_{mm}(z)= &\left(  \frac\omega \pi N+ \frac{i\partial\bar\partial
S(z)}{\pi}N^{-1}+\cdots\right)^m\\ =&N^m\left(\frac{\omega}{\pi}\right)^m+mN^{m-2}\left(\frac{\omega}{\pi}\right)^{m-1}\wedge \frac{ i\partial\bar\partial S(z)}{\pi}+\cdots\end{aligned}
\end{equation} which implies that the integration (\ref{ddd}) admits
complete asymptotics. Each  $a_j(h,s)$ is well defined since $G_s \in L^1(M \times M)$ for $ 0\leq s<2m$.

We can in fact compute each coefficient from the asymptotic expansion of $E_{mm}^N$. From the complete expansion of (\ref{K1}), and  the identity  $$m(i\partial\bar\partial S)\wedge \omega^{m-1}=(\Delta_\omega S)\omega^m$$
we obtain \eqref{coe}
\end{proof}
Now we are ready to prove Theorem \ref{dis}.

\subsection{The case of $0<s<2m$}\

We first prove the case $0<s<2m$, the logarithm case when $s=0$ can be derived by modifications. By the definition of the discrete random Riesz energy, we have
\[ \begin{aligned}E_{\mu_h^N}\mathcal E_{s}&
 =E_{\mu_h^N}\left(G_s(z,w),Z_{s_1^N,\cdots,s_m^N}(z) \otimes Z_{s_1^N,\cdots,s_m^N}(w)- Z_{\Delta}(z)\right)\\
&=\left(G_s(z,w),K^N_{mm}(z,w)-[\Delta]\wedge
\left(E^N_{mm}(z)\otimes 1\right)\right)\\
& = \int_{M \times M} G_s(z,w) \left[K^N_{mm}(z,w)-[\Delta]\wedge
\left(E^N_{mm}(z)\otimes 1\right)\right]\\
&=\int_M\int_{r_g(z,w) \leq \frac{b \sqrt{\log N}}{\sqrt{N}}}
G_s(z,w)\left[K^N_{mm}(z,w)-[\Delta]\wedge \left(E^N_{mm}(z)\otimes 1\right)\right]\\
&+\int_M\int_{r_g(z,w) \geq \frac{b \sqrt{\log N}}{\sqrt{N}}}
G_s(z,w)K^N_{mm}(z,w)\\
\end{aligned}\]
By Lemma \ref{k1}, we rewrite $E_{\mu_h^N}\mathcal E_{s}$ as:

\begin{align*}
E_{\mu_h^N}\mathcal E_{s}&
 =\int_M\int_{r_g(z,w) \leq \frac{b \sqrt{\log N}}{\sqrt{N}}}
G_s(z,w)\left[K^N_{mm}(z,w)-[\Delta]\wedge \left(E^N_{mm}(z)\otimes 1\right)\right]\\
&+\int_M\int_{r_g(z,w) \geq \frac{b \sqrt{\log N}}{\sqrt{N}}}
G_s(z,w)K^N_{mm}(z,w)-\int_{M\times M}G_s(z,w)E_{mm}^N(z)\wedge E_{mm}^N(w)\\
&+a_1(h,s)N^{2m}+\sum_{j=2}^{p-1}a_j(h,s)N^{2m-j}+O(N^{2m-p})\\
&=\int_M\int_{r_g(z,w) \leq \frac{b\sqrt{\log N}}{\sqrt{N}}}
G_s(z,w)\left[K^N_{mm}(z,w)-[\Delta]\wedge
\left(E^N_{mm}(z)\otimes1\right)-E_{mm}^N(z)\wedge E_{mm}^N(w)\right]\\
&+\int_M\int_{r_g(z,w) \geq \frac{b \sqrt{\log N}}{\sqrt{N}}}
G_s(z,w)\left[K^N_{mm}(z,w)-E^N_{mm}(z)\wedge E^N_{mm}(w)\right]\\
&+a_1(h,s)N^{2m}+\sum_{j=2}^{p-1}a_j(h,s)N^{2m-j}+O(N^{2m-p})\\
&:=I+II+III
\end{align*}
where $p$ is a positive integer which will be chosen later on.

The following Lemma asserts that $II$ could be ignored.
\begin{lem}\label{toe} $$II=O\left(N^{-q+\frac{s}{2}}\right)$$ where $q$ is a positive integer which can be chosen large enough.
\end{lem}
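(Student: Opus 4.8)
The plan is to estimate $II$ directly using the off-diagonal asymptotics \eqref{of2f} together with crude bounds on the size of $G_s$ and the integration region. Recall that
$$II = \int_M\int_{r_g(z,w) \geq \frac{b\sqrt{\log N}}{\sqrt N}} G_s(z,w)\left[K^N_{mm}(z,w) - E^N_{mm}(z)\wedge E^N_{mm}(w)\right].$$
First I would invoke \eqref{of2f}: for $r_g(z,w) \geq b\sqrt{\frac{\log N}{N}}$ with $b > \sqrt{q+2m+3}$, the bracketed quantity $K^N_{mm}(z,w) - (E Z_{s^N}\otimes E Z_{s^N})^m = K^N_{mm}(z,w) - E^N_{mm}(z)\wedge E^N_{mm}(w)$ is $O(N^{-q})$ times a smooth volume form on $M\times M$, uniformly in $(z,w)$ in that region. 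Here one must be slightly careful: the error in \eqref{of2f} is stated after factoring out the smooth forms, and multiplying out the $m$th power introduces factors that are $O(N^m)$ (since $EZ_{s^N} = O(N)$), so the bracket is really $O(N^{m-1})\cdot N^{-q'}$ for the raw $q'$ in \cite{SZ1}; renaming, it is $O(N^{-q})$ for any prescribed $q$ provided $b$ is taken large enough depending on $q$. This is the only place the hypothesis on $b$ is used.

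Next I would bound the contribution of $G_s$. On the region $r_g(z,w)\geq \frac{b\sqrt{\log N}}{\sqrt N}$ we have $G_s(z,w) = r_g(z,w)^{-s} \leq \left(\frac{b\sqrt{\log N}}{\sqrt N}\right)^{-s} = b^{-s} N^{s/2} (\log N)^{-s/2} \leq C N^{s/2}$ for $N$ large, using $0 < s < 2m$ and $s\geq 0$. Then
$$|II| \leq C N^{s/2}\cdot N^{-q}\cdot \int_{M\times M} (\text{smooth volume form}) = C' N^{-q + \frac{s}{2}},$$
since $M\times M$ has finite volume. Because $q$ in \eqref{of2f} may be chosen as large as we like (at the cost of enlarging $b$, which only shrinks the near-diagonal region $I$ and is harmless for that part of the argument), we conclude $II = O(N^{-q+\frac{s}{2}})$ with $q$ arbitrarily large, which is the claim. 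The logarithmic case $s=0$ is identical, with $G_0 = -\log r_g(z,w)$, so on the region in question $|G_0(z,w)| \leq |\log(b\sqrt{\log N}/\sqrt N)| \leq C\log N$, giving $|II| \leq C N^{-q}\log N$, absorbed into the same $O(N^{-q+s/2})$ bookkeeping.

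The only genuine subtlety — and the step I would treat most carefully — is the bookkeeping of polynomial-in-$N$ prefactors when passing from the off-diagonal estimate \eqref{of2f} on $K^N_{km}$ (which is stated for the currents themselves, before stripping off the expected-mass normalization) to an estimate good enough here; one must make sure the $O(N^{-q})$ truly dominates the $N^{s/2}$ blow-up of $G_s$ and any $O(N^{m-1})$-type factors from expanding the $m$th power, which is why the statement is phrased with "$q$ can be chosen large enough." Everything else is a routine volume bound, and no cancellation or delicate analysis is needed for this lemma — the real work is deferred to the near-diagonal integral $I$ and to $III$.
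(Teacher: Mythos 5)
Your argument is correct and is essentially identical to the paper's proof: both bound $|G_s(z,w)|\leq c\,(N/\log N)^{s/2}$ on the region $r_g(z,w)\geq b\sqrt{\log N}/\sqrt N$ and combine this with the off-diagonal estimate \eqref{of2f} for $k=m$ to conclude $II=O(N^{-q+\frac{s}{2}})$. The extra caution you raise about polynomial prefactors from expanding the $m$th power is reasonable but is already absorbed into the statement of \eqref{of2f} as the paper uses it, so no further work is needed.
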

\begin{proof}
If $r_g(z,w) \geq \frac{b \sqrt{\log N}}{\sqrt{N}}$, by the definition of Riesz functions, we have, $$|G_s(z,w)|\leq c\left(\frac{N}{\log N}\right)^{\frac{s}{2}}$$ as $N$ large enough. If we choose $k=m$ in the estimate (\ref{of2f}),
$$K_{mm}^N(z,w)-E^N_{mm}(z)\wedge E^N_{mm}(w)=O(N^{-q}) \,\,\,\mbox{on} \,\,\,r_g(z,w) \geq \frac{b \sqrt{\log N}}{\sqrt{N}} $$
where $b>\sqrt{q+2k+3}$.
If we combine these two estimate, the term $II$ reads,
$$\int_M\int_{r_g(z,w) \geq \frac{b \sqrt{\log N}}{\sqrt{N}}}
G_s(z,w)\left[K^N_{mm}(z,w)-E^N_{mm}(z)\wedge E^N_{mm}(w)\right]=O(N^{-q+\frac{s}{2}})$$
which completes the proof.
\end{proof}

Now we turn to estimate $I$ to finish the proof of Theorem \ref{dis}.
\begin{lem}\label{relmmma2}
$$I=c_m(s)N^{m+\frac{s}{2}}+O\left(N^{m+\frac{s-1}{2}}(\log N)^{m-\frac{s}{2}}\right) \,\,\, \text{$0<s<2m, s<4$,}$$
where $c(s)$ is a constant given explicitly in the proof.
\end{lem}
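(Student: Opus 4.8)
The plan is to evaluate the near-diagonal integral
$$I=\int_M\int_{r_g(z,w)\le b\sqrt{\log N/N}}G_s(z,w)\Bigl[K^N_{mm}(z,w)-[\Delta]\wedge(E^N_{mm}(z)\otimes 1)-E^N_{mm}(z)\wedge E^N_{mm}(w)\Bigr]$$
by rescaling around each point $z\in M$ by a factor $\sqrt N$ and using the universal scaling asymptotics of the pair correlation current from \S\ref{rescale}. First I would observe that the delta term $[\Delta]\wedge(E^N_{mm}(z)\otimes 1)$ exactly cancels the delta-function part of $K^N_{mm}$ exhibited in \eqref{off}, so the bracket reduces to the $L^1$ off-diagonal piece $\bigl(K^N_{mm}-E^N_{mm}\otimes E^N_{mm}\bigr)$ restricted to $M\times M\setminus\Delta$, which near the diagonal is governed by \eqref{current}. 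Then I would fix $z=z_0$, introduce normal coordinates, and substitute $w=z_0+u/\sqrt N$; the measure $E^N_{mm}(z)\wedge E^N_{mm}(w)$ contributes a factor $N^{2m}$ from the leading $(N\omega/\pi)^m\otimes(N\omega/\pi)^m$, while $d\tilde V(z)$ integration over $M$ gives the overall constant, the Jacobian of the rescaling in $w$ gives $N^{-m}$, and $G_s(z,w)=r_g(z,w)^{-s}\sim N^{s/2}|u|^{-s}$. Collecting powers of $N$ gives $N^{2m}\cdot N^{-m}\cdot N^{s/2}=N^{m+s/2}$, and the leading coefficient is
$$c_m(s)=\int_{\C^m}|u|^{-s}\bigl[\kappa_{mm}(|u|)-1\bigr]\,\frac{i}{2}\partial\dbar|u|^2{}^m/(\text{normalization})=2m\int_0^\infty\bigl[\kappa_{mm}(r)-1\bigr]r^{2m-1-s}\,dr,$$
after passing to polar coordinates in $\C^m=\R^{2m}$ (the surface area of the unit sphere in $\R^{2m}$ supplies the factor $2m$ once the volume normalization $\pi^{-m}$ is taken into account).

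The key steps in order: (i) cancel the diagonal current and reduce $I$ to an integral of $G_s\cdot(K^N_{mm}-E^N_{mm}\otimes E^N_{mm})$ over the near-diagonal tube; (ii) for each $z$, rescale $w\mapsto z+u/\sqrt N$ and apply \eqref{current} to replace $K^N_{mm}$ by $\kappa_{mm}(|u|)$ times the Euclidean volume form, with error $O(N^{-1/2})$ times a smooth volume form; (iii) similarly rescale $E^N_{mm}(z)\wedge E^N_{mm}(w)$, whose rescaled density tends to the constant $1$ (the Poisson value), using \eqref{K1}; (iv) change to polar coordinates $r=|u|$, so the rescaled region becomes $0\le r\le b\sqrt{\log N}$, and integrate, obtaining $c_m(s)N^{m+s/2}$ from extending the $r$-integral to $[0,\infty)$ plus a tail estimate; (v) control the tail $\int_{b\sqrt{\log N}}^\infty$ using \eqref{leading2}, which gives $[\kappa_{mm}(r)-1]r^{2m-1-s}=O(r^{2m+3-s}e^{-r^2})$, so the tail is superpolynomially small; (vi) control the error term by bounding the $O(N^{-1/2})$ discrepancy in \eqref{current} times $G_s$ over the tube: $N^{-1/2}\cdot N^{s/2}\cdot N^{-m}\cdot(\text{rescaled volume }(\log N)^m)$ times $N^{2m}$, i.e. $O(N^{m+(s-1)/2}(\log N)^{m})$; refining the rescaled-volume count against the integrable weight $r^{2m-1-s}$ gives the stated $(\log N)^{m-s/2}$.

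The convergence of $c_m(s)$ at $r\to 0$ is where the constraint $s<4$ enters: by \eqref{leading}, $\kappa_{mm}(r)-1\sim\frac{m+1}{4}r^{4-2m}$ (for $m\ge 3$ the integrand behaves like $r^{3-s}$ near $0$, integrable iff $s<4$), and for $m=1,2$ it is bounded near $0$ so $s<2m$ suffices; together with the decay \eqref{leading2} at infinity the integral is absolutely convergent exactly on $0<s<\min\{2m,4\}$. The main obstacle is step (vi): making the error estimate uniform in $z_0\in M$ and sharp enough to land at $O(N^{m+(s-1)/2}(\log N)^{m-s/2})$ rather than a cruder bound. This requires care because the $O(N^{-1/2})$ error in \eqref{current} is uniform only on compact sets of rescaled coordinates, so one must either truncate the tube at a slightly smaller radius and absorb the remainder into the tail, or track the $z_0$-dependence of the remainder in \eqref{current} explicitly; the logarithmic powers come from integrating the (integrable but not uniformly bounded) Riesz weight over the expanding rescaled ball of radius $b\sqrt{\log N}$. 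A secondary technical point is justifying that the rescaled density of $E^N_{mm}(z)\wedge E^N_{mm}(w)$ converges to the constant $1$ with a controlled rate, which follows from \eqref{K1} and the smoothness of $\omega$, but must be tracked to the same order as the $\kappa_{mm}$ term so the two cancel correctly to produce $\kappa_{mm}-1$ rather than spurious lower-order contamination.
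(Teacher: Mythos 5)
Your proposal is correct and follows essentially the same route as the paper's proof: rescale $w=z+u/\sqrt N$, invoke the universal scaling asymptotics \eqref{current} for $K^N_{mm}-[\Delta]\wedge(E^N_{mm}\otimes 1)$ and the expansion \eqref{K1} together with the K\"ahler potential expansion for $E^N_{mm}(z)\wedge E^N_{mm}(w)$ to produce the factor $\kappa_{mm}(|u|)-1$, pass to polar coordinates to obtain $c_m(s)=2m\int_0^\infty[\kappa_{mm}(r)-1]r^{2m-1-s}\,dr$ with the leading power $N^{m+s/2}$, and bound the $O(N^{-1/2})$ discrepancy integrated against the Riesz weight over the ball $|u|\le b\sqrt{\log N}$ to get the error $O(N^{m+\frac{s-1}{2}}(\log N)^{m-\frac{s}{2}})$. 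The identification of where $s<4$ enters (integrability of $r^{3-s}$ near $0$ via \eqref{leading}) also matches the paper.
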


\begin{proof}
We change variable
$$w=z+\frac u{\sqrt N}$$
 Then by the rescaling property of (\ref{current}), we have,
\begin{align}\label{become H11}
K^N_{mm}(z,w)-[\Delta]\wedge (E^N_{mm}(z)\otimes1)=&K^N_{mm}(z,z+\frac
u{\sqrt N})-[\Delta]\wedge
(E^N_{mm}(z)\otimes1)\notag\\
=&N^m\kappa_{mm}(|u|)\left(\frac{\omega}{\pi}\right)^m\wedge(\frac
i{2\pi}\partial\bar\partial |u|^2)^m+O(N^{m-\frac{1}{2}})
\end{align}
where $\kappa_{mm}(|u|)$ is defined in (\ref{kappa}) with $k=m$.

By the estimate of (\ref{K1}),
\begin{equation}\label{kk}
E^N_{mm}(z)\wedge E^N_{mm}(w)=N^{2m}\left(\frac{\omega(z)}{\pi}\right)^m\wedge\left(\frac{\omega(w)}{\pi}\right)^m +O(N^{2m-2}).
\end{equation}
By choosing the normal coordinate, if $\varphi$ is the K\"ahler potential, then we have $$\varphi(z+\frac{u}{\sqrt
N})=\frac{|u|^2}{2N}+O(\frac {|u|^3}{N^{\frac 32}}).$$ Since
$u=\sqrt N(w-z)$, we have
\begin{align*}\partial_w\bar\partial_w\varphi(w)&=\partial_u\bar\partial_u\varphi(z+\frac u{\sqrt
N})=\partial_u\bar\partial_u\frac{|u|^2}{2N}+O(N^{-\frac{3}{2}})
\end{align*} Therefore,
\begin{equation}\label{time12}
\frac N\pi\omega(w)=\frac i{2\pi}\partial_u\bar\partial_u|u|^2+O(N^{-\frac{1}{2}})
\end{equation}

Thus we rewrite (\ref{kk}) near the diagonal when $|u|_g\leq b\sqrt {\log N}$ as,
\begin{equation}{\label{K-K}}
E^N_{mm}(z)\wedge E^N_{mm}(z+\frac u{\sqrt N})=N ^m \left(\frac{\omega}{\pi}\right)^m\wedge
\left(\frac i {2\pi}\partial\bar\partial |u|^2 \right)^m+ O(N^{m-\frac{1}{2}})
\end{equation}
Recall the definition of $I$,
$$I=\int_M\int_{r_g(z,w) \leq \frac{b\sqrt{\log N}}{\sqrt{N}}}
G_s(z,w)\left[K^N_{mm}(z,w)-[\Delta]\wedge
\left(E^N_{mm}(z)\otimes1\right)-E_{mm}^N(z)\wedge E_{mm}^N(w)\right]$$ Now we apply identities (\ref{become H11}), (\ref{K-K}) to  rewrite,
\begin{equation} \label{iesti}\begin{aligned}
I=N^m\int_M\int_{0<|u|_g \leq b \sqrt{\log N}}
&G_s(z,z+\frac u{\sqrt N})\left[\kappa_{mm}(|u|)-1+ O(N^{-\frac{1}{2}} )\right]\\
&\cdot\left(\frac{\omega}{\pi}\right)^m\wedge \left(\frac{i}{2\pi}\partial\bar\partial |u|^2\right)^m \end{aligned}
\end{equation} The error term $O(N^{-\frac{1}{2}})$ comes from estimate (\ref{become H11}) and (\ref{K-K}).

Thus we further rewrite (\ref{iesti}) as,
\begin{equation}\label{dk0}\begin{aligned}I=N^m\int_M\int_{|u|_g \leq b \sqrt{\log N}}
&G_s(z,z+\frac u{\sqrt N})\left[\kappa_{mm}(|u|)-1\right]\\
&\cdot\left(\frac{\omega}{\pi}\right)^m\wedge \left(\frac{i}{2\pi}\partial\bar\partial |u|^2\right)^m + R_s
\end{aligned}\end{equation} Denote $I=\tilde I+R_s$.
The estimate of the error term $R_s$ is as follows,
\begin{align*}
R_s=O(N^{m-\frac{1}{2}}) \int_M\int_{|u|_g \leq b \sqrt{\log N}}
G_s(z,z+\frac u{\sqrt N})\left(\frac{\omega}{\pi}\right)^m\wedge \left(\frac{i}{2\pi}\partial\bar\partial |u|^2\right)^m
\end{align*}If we choose the polar coordinate, we have
\begin{align*}&\left|\int_M\int_{0\leq|u|_g \leq b \sqrt{\log N}}
G_s(z,z+\frac u{\sqrt N})\left(\frac{\omega}{\pi}\right)^m\wedge \left(\frac{i}{2\pi}\partial\bar\partial |u|^2\right)^m\right|\\
&\leq c\int_{0\leq |u|_g \leq b \sqrt{\log N}}\frac{1}{|\frac{u}{\sqrt N}|_g^s}|u|^{2m-1}d |u|\leq c N^{\frac{s}{2}}(\log N)^{m-\frac{s}{2}}\end{align*}
In the last step, we apply the estimate of the geodesic distance function $r_g(z,w)$ near the origin,  \begin{equation}\label{sidnc}r_g(z,z+\frac u{\sqrt N})=\frac{|u|}{\sqrt N}+O(N^{-\frac{3}{2}})\end{equation}
in the normal coordinate.
 Thus, \begin{equation}R_s=O\left(N^{m+\frac{s-1}{2}}(\log N)^{m-\frac{s}{2}}\right)\,\,\, \text{if $0<s<2m$,}
\end{equation}

The estimate of the integration $\tilde I$ is as follows, first we rewrite,
\begin{align*}\tilde I=& N^{m+\frac{s}{2}}\int_M\int_{0\leq|u| \leq \infty} \frac{1}{|u|^s}\left[\kappa_{mm}(|u|)-1\right]\left(\frac{\om}{\pi}\right)^m\wedge \left(\frac{i}{2\pi}\partial\bar\partial |u|^2\right)^m\\
&+\mbox{lower order terms}\\
:=&A_s+B_s
\end{align*} by applying the expansion of the geodesic distance function (\ref{sidnc}).
 We use  polar coordinates to compute the leading term,
\begin{equation}\label{eq:newejjjjq}
	\begin{aligned}A_s=& N^{m+\frac{s}{2} }\int_M\int_{0\leq|u| \leq \infty} \frac{1}{|u|^s}\left[\kappa_{mm}(|u|)-1\right]\left(\frac{\om}{\pi}\right)^m\wedge \left(\frac{i}{2\pi}\partial\bar\partial |u|^2\right)^m\\
=& N^{m+\frac{s}{2} }\frac{V_{2m-1}}{\pi^m}\int_{0\leq|u| \leq \infty}\frac{1}{|u|^s}\left[\kappa_{mm}(|u|)-1\right]|u|^{2m-1}d|u|\\
=&\left(2m\int_{0\leq|u| \leq \infty}\left[\kappa_{mm}(r)-1\right]r^{2m-1-s}dr\right)N^{m+\frac{s}{2} }\\
:=&c_m(s)  N^{m+\frac{s}{2} }
\end{aligned}
\end{equation} where $V_{2m-1}$ is the volume of $(2m-1)$-dimensional unit sphere.

The integrand in $c(s)$ is always integrable on the tail $a<|u|<\infty$ as $a$ large enough, this is easy to see because of
asymptotic property $\kappa_{mm}(r)=1+O(r^4e^{-r^2})$
as $r\to \infty$. But we need some conditions to ensure $c(s)$ to be well defined near the origin.
Since we know $\kappa_{mm}(r)\to \frac {m+1} 4r^{4-2m}$ as $r\to 0$, it's easy to check that $s<4$ is a necessary condition.

 Moreover, we have the easy estimate,
$$B_s=O(N^{m+\frac {s} 2-1})$$
Thus the growth of the error term $R_s$ will dominate the growth of $B_s$, and $$I=A_s+R_s=c_m(s)  N^{m+\frac{s}{2} }+O\left(N^{m+\frac{s-1}{2}}(\log N)^{m-\frac{s}{2}}\right)$$
which completes the proof.\end{proof}

We now finish the proof of the main result Theorem \ref{dis} for $0<s<2m$.
\begin{proof} For the case $0<s<2m$ and $s<4$,
recall the decomposition of $E_{\mu_{h}^N}\mathcal E_{s}$,
$$E_{\mu_{h}^N}\mathcal E_{s}=a_1(h,s)N^{2m}+\sum_{j=2}^{p-1}a_j(h,s)N^{2m-j}+O(N^{2m-p})+I+II$$
Now we apply the estimates in Lemma \ref{toe} and Lemma \ref{relmmma2}, we have,
\begin{align*}E_{\mu_{h}^N}\mathcal E_{s}
=&a_1(h,s)N^{2m}+\sum_{j=2}^{p-1}a_j(h,s)N^{2m-j}+O(N^{2m-p})\\
&+c_m(s)N^{m+\frac{s}{2}}+O\left(N^{m+\frac{s-1}{2}}(\log N)^{m-\frac{s}{2}}\right)+ O(N^{-q+\frac{s}{2}})
  \end{align*} The smallest integer $p$ such that $2m-(p-1)>m+\frac{s}{2}$ will ensure the above expansion is well defined, i.e, $p=[m-\frac{s}{2}]+1$. The last term is negligible for  $q$ large enough. Thus we rewrite,
  \begin{align*}E_{\mu_{h}^N}\mathcal E_{s}
=&a_1(h,s)N^{2m}+\sum_{j=2}^{p-1}a_j(h,s)N^{2m-j}\\
&+c_m(s)N^{m+\frac{s}{2}}+O\left(N^{m+\frac{s-1}{2}}(\log N)^{m-\frac{s}{2}}\right)
  \end{align*}
  which completes the proof.  \end{proof}
\subsection{The case of $s=0$}\

Now we sketch the proof for the case of $s=0$ in Theorem \ref{dis}.

First, as before, we rewrite,  \begin{align*}
E_{\mu_h^N}\mathcal E_{0}
&=-\int_M\int_{r_g(z,w) \leq \frac{b\sqrt{\log N}}{\sqrt{N}}}
\log r_g(z,w) \left[K^N_{mm}(z,w)-[\Delta]\wedge
\left(E^N_{mm}(z)\otimes1\right)-E_{mm}^N(z)\wedge E_{mm}^N(w)\right]\\
&-\int_M\int_{r_g(z,w) \geq \frac{b \sqrt{\log N}}{\sqrt{N}}}
\log r_g(z,w) \left[K^N_{mm}(z,w)-E^N_{mm}(z)\wedge E^N_{mm}(w)\right]\\
&+a_1(h,0)N^{2m}+\sum_{j=2}^{p-1}a_j(h,0)N^{2m-j}+O(N^{2m-p})\\
&:=I+II+III
\end{align*}
Follow the steps in Lemma \ref{toe}, $$II=O(N^{-q}\log N) \,\,\,\text{for $q$ large enough}$$
since $|G_0|$ is bounded by $\log N$ if $r_g(z,w) \geq \frac{b \sqrt{\log N}}{\sqrt{N}}$. Thus the term $II$ can be
ignored in the final expansion.

 The statement in Theorem \ref{dis} about logarithm Riesz energies follows directly by the following lemma.
 \begin{lem}\label{lemma0} \begin{equation}I=-N^{m }\log \sqrt N-c_mN^m
+O(N^{m-\frac{1}{2}}(\log N)^{m+1})\end{equation}
where $c$ is a constant given by (\ref{eq:dddrreneweq}).
 \end{lem}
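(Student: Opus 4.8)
The plan is to mimic exactly the structure of the proof of Lemma~\ref{relmmma2}, but keeping track of the extra logarithmic factor that arises because $G_0 = -\log r_g$ rather than $r_g^{-s}$. First I would change variables $w = z + u/\sqrt N$ and apply the rescaling asymptotics \eqref{current} together with \eqref{K-K}, so that the integrand $K^N_{mm}(z,w) - [\Delta]\wedge(E^N_{mm}(z)\otimes 1) - E^N_{mm}(z)\wedge E^N_{mm}(w)$ becomes $N^m[\kappa_{mm}(|u|) - 1 + O(N^{-1/2})]\left(\frac{\omega}{\pi}\right)^m \wedge \left(\frac{i}{2\pi}\ddbar|u|^2\right)^m$ on the region $|u|_g \leq b\sqrt{\log N}$. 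The key new point is that, by \eqref{sidnc}, we have $-\log r_g(z, z + u/\sqrt N) = \log\sqrt N - \log|u| + O(N^{-1})$, so the $\log\sqrt N$ piece factors out of the $u$-integral.

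Next I would split $I = I_1 + I_2 + (\text{error})$, where $I_1$ collects the $\log\sqrt N$ term and $I_2$ collects the $-\log|u|$ term. For $I_1$, the integral $\int_M \int_{|u|_g \leq b\sqrt{\log N}} \left[\kappa_{mm}(|u|) - 1\right]\left(\frac{\omega}{\pi}\right)^m \wedge \left(\frac{i}{2\pi}\ddbar|u|^2\right)^m$ converges as the cutoff $b\sqrt{\log N} \to \infty$ because $\kappa_{mm}(r) - 1 = O(r^4 e^{-r^2})$ at infinity by \eqref{leading2} and, since $m$ is finite, $\kappa_{mm}(r)-1 \sim \frac{m+1}{4}r^{4-2m}$ is integrable against $r^{2m-1}\,dr$ near $0$ for all $m$ (the power is $r^3$). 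Evaluating in polar coordinates, $\int_M\int_0^\infty [\kappa_{mm}(r)-1]\,2m\,r^{2m-1}\,dr\,d\tilde V = 2m\int_0^\infty[\kappa_{mm}(r)-1]r^{2m-1}\,dr$; I would need to check this equals $1$ — this should follow from the normalization that the total expected mass identity $\int K^N_{mm} = (\int E^N_{mm})^2 = N^{2m}$ modulo the diagonal term, matching the $N^{2m}$ absorbed into the $a_j$'s, so the renormalized integral of $\kappa_{mm}-1$ is forced. This gives $I_1 = -N^m\log\sqrt N\cdot(1) + O(N^{m-1/2}(\log N)^{\cdot})$, with the truncation tail contributing $O(N^{m-1/2}\cdot\text{polylog})$ since $\kappa_{mm}-1$ decays super-polynomially. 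For $I_2$, the same polar-coordinate computation as in \eqref{eq:newejjjjq} but with $-\log|u|$ in place of $|u|^{-s}$ gives $I_2 = -\left(2m\int_0^\infty \log r\,[\kappa_{mm}(r)-1]r^{2m-1}\,dr\right)N^m + \cdots = -c_m N^m + \cdots$, matching \eqref{eq:dddrreneweq}; convergence near $r = 0$ is fine because $\log r \cdot r^{4-2m}\cdot r^{2m-1} = \log r\cdot r^3 \to 0$.

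Finally I would bound the error terms: the $O(N^{-1})$ correction in the expansion of $-\log r_g$ contributes $O(N^{m-1})\int_{|u|_g\leq b\sqrt{\log N}}(\cdots)$, which is lower order; the $O(N^{m-1/2})$ term from \eqref{become H11} and \eqref{K-K} contributes $O(N^{m-1/2})\int_{|u|_g\leq b\sqrt{\log N}}|\log r_g(z,z+u/\sqrt N)|\left(\frac\omega\pi\right)^m\wedge(\frac i{2\pi}\ddbar|u|^2)^m = O(N^{m-1/2}(\log N)^{m+1})$, where the extra $\log$ and the power $m+1$ come from integrating $\log|u|$ against $|u|^{2m-1}\,d|u|$ out to radius $b\sqrt{\log N}$; and the metric-distortion between $\left(\frac\omega\pi\right)^m$ and the Euclidean volume in normal coordinates is $O(|u|^2/N)$, again lower order. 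Collecting, $I = -N^m\log\sqrt N - c_m N^m + O(N^{m-1/2}(\log N)^{m+1})$, as claimed. I expect the main obstacle to be the bookkeeping that pins down the coefficient of $N^m\log\sqrt N$ to be exactly $-1$: one must argue carefully that the renormalized integral $2m\int_0^\infty[\kappa_{mm}(r)-1]r^{2m-1}\,dr = 1$ rather than merely some constant, using the exact mass identity \eqref{oxf2f}--\eqref{off} for $K^N_{mm}$ and the fact that the diagonal delta contributes precisely the term already subtracted; the rest is a routine adaptation of Lemma~\ref{relmmma2}.
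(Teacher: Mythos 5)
Your proposal follows essentially the same route as the paper's proof: change variables $w=z+u/\sqrt N$, split $G_0$ into the $\log\sqrt N$ and $-\log|u|$ contributions, evaluate the latter in polar coordinates to produce the constant $c_m$ of \eqref{eq:dddrreneweq}, bound the remainder by $O(N^{m-\frac12}(\log N)^{m+1})$, and pin down the coefficient of $N^m\log\sqrt N$ by the exact mass identity for $K^N_{mm}$ (total mass $N^{2m}$, diagonal mass $N^m$, product mass $N^{2m}$), which is exactly the computation \eqref{eqau} used to prove the claim \eqref{s0}. The one slip is a sign: the renormalized integral $2m\int_0^\infty[\kappa_{mm}(r)-1]r^{2m-1}\,dr$ equals $-1$, not $+1$, since the mass computation gives $(N^{2m}-N^m-N^{2m})/N^m=-1$; combined with the $+\log\sqrt N$ coming from $-\log(|u|/\sqrt N)$ this yields the stated $-N^m\log\sqrt N$, so your final formula is correct but your intermediate claim that the integral equals $1$ is inconsistent with it.
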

 \begin{proof}Follow the proof of (\ref{dk0}) in Lemma \ref{relmmma2}, we rewrite $I=\tilde I+R_0$ as follows, \begin{equation}\begin{aligned}I=N^m\int_M\int_{|u|_g \leq b \sqrt{\log N}}
&G_0(z,z+\frac u{\sqrt N})\left[\kappa_{mm}(|u|)-1\right]\\
&\cdot\left(\frac{\omega}{\pi}\right)^m\wedge \left(\frac{i}{2\pi}\partial\bar\partial |u|^2\right)^m + R_0
\end{aligned}\end{equation}
where \begin{align*}
R_0=O(N^{m-\frac{1}{2}}) \int_M\int_{|u|_g \leq b \sqrt{\log N}}
G_0(z,z+\frac u{\sqrt N})\left(\frac{\omega}{\pi}\right)^m\wedge \left(\frac{i}{2\pi}\partial\bar\partial |u|^2\right)^m
\end{align*}
Using  the geodesic distance estimate (\ref{sidnc}), we have,
 $$\left|\int_{0\leq |u|_g \leq b \sqrt{\log N}}\log \left({\frac{|u|_g}{\sqrt N}}\right)|u|^{2m-1}d |u|\right|\leq c(\log N)^{m+1}$$
Thus,
$$R_0=O\left(N^{m-\frac{1}{2}}(\log N)^{m+1}\right)\,\,\, \text{if $s=0 $.}$$
We further rewrite $\tilde I=A_0+B_0$ by applying the  expansion of the geodesic distance function (\ref{sidnc}) again,
\begin{equation}\label{eq:nedddweq}
	\begin{aligned}A_0=&- N^{m }\int_M\int_{0\leq|u| \leq \infty} \log\left(\frac{|u|}{\sqrt N}\right)\left[\kappa_{mm}(|u|)-1\right]\left(\frac{\om}{\pi}\right)^m\wedge \left(\frac{i}{2\pi}\partial\bar\partial |u|^2\right)^m\\
=&N^{m }\log \sqrt N\int_{0\leq|u| \leq \infty}\left[\kappa_{mm}(|u|) -1\right]\left(\frac{i}{2\pi}\partial\bar\partial |u|^2\right)^m\\
&-\left(2m\int_{0\leq|u| \leq \infty}\log r\left[\kappa_{mm}(r)-1\right]r^{2m-1}dr\right)N^{m }
\end{aligned}
\end{equation}
Now claim:
\begin{equation}\label{s0}\int_{0\leq|u| \leq \infty}\left[\kappa_{mm}(|u|)-1\right]\left(\frac{i}{2\pi}\partial\bar\partial |u|^2\right)^m=-1\end{equation}
We calculate the integration directly by the rescaling property: 
\begin{equation}\label{eqau}\begin{aligned}&\int_M \left(\frac{\omega}{\pi}\right)^m\int_{0\leq|u| \leq \infty}\left[\kappa_{mm}(|u|)-1\right]\left(\frac{i}{2\pi}\partial\bar\partial |u|^2\right)^m\\
=&\lim_{N\to\infty}\frac 1 {N^m}\int_M\int_{r_{g}(z,w)\leq \frac{b\sqrt{\log N}}{\sqrt N}}\left\{K^N_{mm}(z,w)-[\Delta]\wedge
E^N_{mm}(w)
-E_{mm}^N(z)\wedge E_{mm}^N(w)\right\}\\
=&\lim_{N\to\infty}\frac 1 {N^m}\left(\int_M\int_M\cdots -\int_M\int_{r_g(z,w)\geq \frac{b\sqrt{\log N}}{\sqrt N}}\cdots\right)\\
\end{aligned}\end{equation}
The claim follows by the following two calculations: By the definition of correlation currents, the first term reads, 
$$\begin{aligned}&\lim_{N\to\infty}\frac 1 {N^m}\int_M\int_{M}\left\{K^N_{mm}(z,w)-[\Delta]\wedge
E^N_{mm}(w)
-E_{mm}^N(z)\wedge E_{mm}^N(w)\right\}\\=&\lim_{N\to\infty}\frac 1 {N^m}(N^{2m}-N^m-N^{2m})=-1
\end{aligned}$$
The second term reads, 
$$\begin{aligned}&\lim_{N\to\infty}\frac 1 {N^m}\int_M\int_{r_{g}(z,w)\geq \frac{b\sqrt{\log N}}{\sqrt N}}\left(K^N_{mm}(z,w)
-E_{mm}^N(z)\wedge E_{mm}^N(w)\right)\\&=\lim_{N\to\infty}O(N^{-q-m})=0
\end{aligned}$$ where we applied estimate (\ref{of2f}) for $q$ large enough.  
Thus \begin{equation}
	\begin{aligned}A_0=-N^{m }\log\sqrt N
-c_mN^{m }
\end{aligned}
\end{equation} where $c_m$ is given by \eqref{eq:dddrreneweq}.
The integral above is also well defined by the asymptotic properties of $\kappa_{mm}(r)$.
And it's also easy to check, $$B_0=O(N^{m-1})$$
    The growth of $B_0$ is less than the
growth of $R_0$, thus the error terms $B_0$ can be absorbed in the error term $R_0$. Thus in the case of $s=0$,
$$I=A_0+R_0=-N^{m }\log \sqrt N-c_mN^m
+O(N^{m-\frac{1}{2}}(\log N)^{m+1})$$
which completes the proof.
\end{proof}

\section{Proof of Proposition \ref{POS}}

The proof is  almost trivial but we include it to clarify the structure of 
 $\kappa_{mm}(r) - 1$ \eqref{pointpaircor}. The main point is that $c_m(s)$ is a meromorphic function
with  poles at $s = 4, 2m$. When $m \geq 2$, the only pole for $s \in [0, 4]$ is at $s = 4$. When $m \geq 3$
the residue is positive, while for $m = 2$ it is negative. When $m = 1$ it also  has a pole at $s = 2 \in [0, 4]$ 
with a negative residue. The Proposition just uses these facts to draw conclusions on positivity near enough
to the poles.

The results are illustrated in the following computer graphics in the cases $m = 2, 3, 4$. The graphs
for $m \geq 4$ are rather similar to that for $m = 3, 4$.  The integral in
\eqref{POSITIVE}  or more precisely $\int_0^{\infty} (\kappa_{mm}-1)r^{2m-1-s}dr$ is evaluated numerically over the interval $[0, 10000]$ so that positivity of the numerical
integration implies positivity of the true integral. As the  figures show, $c_m(s) < 0$ for small $s$, in
which case the random zeros have smaller Riesz $s$-energy on average than general random point. When $m = 2$
it appears always to be negative, so that random zeros have less energy than random points on average. When $m \geq 3$,
$c_m(s) > 0$ for $s $ somewhat larger than $s = 1$ and the rise is very steep as  $s \to 4$.
 As $m$ increases, the clumping of zeros increases and hence the Riesz energies increase.

\begin{center} \text{m=2}
\includegraphics[scale=0.4]{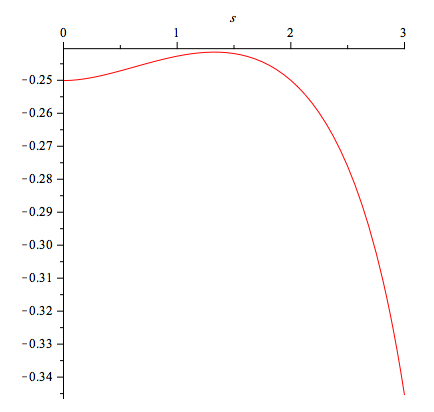}
\includegraphics[scale=0.4]{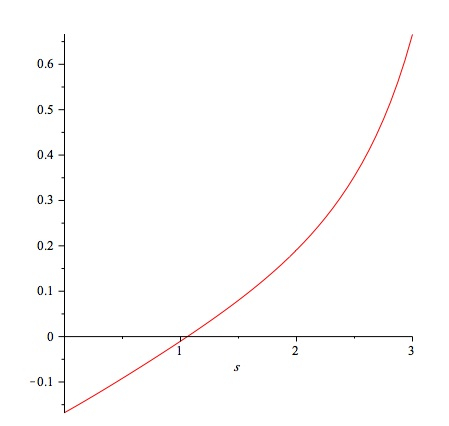} 
\includegraphics[scale=0.4]{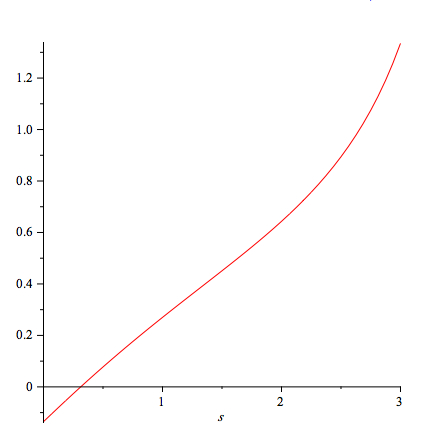} 
\text{m=3,4}
 \end{center}

Here are the details:
We observe that the numerator of $\kappa_{mm}$  consists of three terms
\begin{equation}
\left\{ \begin{array}{l}  \kappa_{mm}^I(r) = m (1-{v}^{m+1} ) (1-v ), \\ \\
\kappa_{mm}^{II}(r) = r^2(2m+2) ({v}^{m+1}-v ), \\ \\
\kappa_{mm}^{III}(r) = r^4\left[{v}^{m+1}+{v}^{m}+ ( \{m+1\}v+1 )(v^m-v)/(v-1)\right].   \end{array}\label{pointpaircora} \right. \end{equation}
Since $v < 1$ only the second is negative.  Furthermore the second and third
terms are divisible by $v = e^{-r^2}$ and therefore the first term dominates for large $r$.
We claim that there exists $r_0(m) $ so that  $\kappa_{mm}(r) - 1$  is positive for  $r \geq r_0(m)$.
We do not attempt to find the best $r_0(m)$ but rather look for a simple one.

 We first
drop  the terms $v^m$ or higher powers from the numerator , simplifying
the numerator to $$N_0(m, x) : = m(1 - e^{-x}) -   (2m + 2)  x e^{-x}  + x^2[(m + 1) e^{-x} + 1] \frac{e^{-x}}
{1 - e^{-x}}$$   with $x  = r^2$.
We would like to find $x$ so that $N_0(m, x) > m (1 - e^{-x})^{m + 2}$, i.e. so that
 $$  x^2[\frac{(m + 1) }{m}e^{-x} + \frac{1}{m}] \frac{e^{-x}}
{1 - e^{-x}}   - \frac{ (2m + 2)}{m}   x e^{-x}  - e^{-x}  > (1 - e^{-x})^{m + 2} - 1. $$
In fact, the left side is easily seen to be positive if $x \geq (2m + 3)$.
Returning to our original numerator, we want to show that $N_0(m, x)
> m $ if $x \geq (2m + 3) $. Given the above observations, it suffices to show that the missing
terms $- m v^{m + 1} (1 - v) + r^2(2m + 2) v^{m + 1}   > 0$ and this also holds when $x \geq 3$. Consequently,
$\kappa_{mm}(r)  - 1> 0$ if $x \geq (2m +3) . $ Thus, we can take $r_0(m) = \sqrt{(2m + 3)}.$
Hence, to prove that $c_m(s) > 0$ if suffices to show that
\begin{equation} \label{POSITIVE} \int_0^{\sqrt{2m + 3}}  \left[\kappa_{mm}(r)-1\right]r^{2m-1-s}dr > 0 \end{equation}
for $s \geq s_m$.
Indeed, the contribution for small $r$ has the form,
\begin{equation} \label{small} \int_{0}^{\epsilon} \left[\kappa_{mm}(r)-1\right]r^{2m-1-s}dr =
\frac{m + 1}{4} \frac{\epsilon^{4 - s} }{4 - s} - \frac{\epsilon^{2m - s}}{2m - s} + O_m (\epsilon^{8 - s}), \end{equation}
and only the first term has a pole as $s \to 4^-$ when $m \geq 3$. When $m = 2$ the integral of $-1$ 
also blows up at $s = 4$ and the coefficient of $\frac{1}{4 -s}$ is $-1/4$, so the integral is negative
for $s $ near $4$; when $m = 1$ the integral of $-1$  term tends to $-\infty$
at $s = 2$.  Hence in proving positivity we assume $m \geq 3$.   On the other hand, for $m \geq 3$
the integral of the left side of \eqref{POSITIVE} over the interval $[\epsilon, \sqrt{2m + 3}]$ is continuous
in $s \in [0, 4]$ and of course  has a uniform upper bound. Hence  \eqref{POSITIVE} holds for $s < 4$ sufficiently
close to $4$.

\section{Proof of Theorem \ref{con}}
 The proof of Theorem \ref{con} is similar to Theorem \ref{dis}. For simplicity, we only prove the case when $s\neq 0$, the result for logarithm energies can be derived by few modifications. We now sketch the proof as follows,
\begin{proof}
Suppose $Z_{s_1^N,\cdots, s^N_k}$ is the zero locus of $k< m$ holomorphic random sections, i.e., analytic subvariety of positive dimension $m-k$,
by the definition of the continuous random Riesz energy (\ref{defsd2}) and (\ref{msthi}), we have,
\[ \begin{aligned}E_{\mu_{h}^N}\mathcal E^N_{s} =&
E \left(G_s(z,w)dV_{k}(z)dV_{k}(w),Z_{s_1^N,\cdots, s^N_k}(z) \otimes Z_{s_1^N,\cdots, s^N_k}(w)\right)\\
= & \int_{M \times M} G_s(z,w)dV_{k}(z)dV_{k}(w)\wedge K^N_{km}(z,w)\\
=&\int_{M \times M} G_s(z,w)dV_{k}(z)dV_{k}(w)\wedge E_{km}^N(z)\wedge E_{km}^N(w)\\
&+ \int_{M \times M} G_s(z,w)dV_{k}(z)dV_{k}(w)\wedge \left[ K^N_{km}(z,w)-E_{km}^N(z)\wedge E_{km}^N(w)\right]\\
=:& I+II\\
\end{aligned}\]
where $dV_k=\frac{\omega^{m-k}}{(m-k)!}$.

Following the step in Lemma \ref{k1}, $E^N_{km}(z)$ which is the expectation of zero locus $Z_{s_1^N,\cdots, s^N_k}$ admits the following expansions,  \begin{equation}\label{ddddd2}\begin{aligned}
E^N_{km}(z)= &\left(  \frac\omega \pi N+ \frac{i\partial\bar\partial
S(z)}{\pi}N^{-1}+\cdots\right)^k\\ =&N^k\left(\frac{\omega}{\pi}\right)^k+kN^{k-2}\left(\frac{\omega}{\pi}\right)^{k-1}\wedge \frac{ i\partial\bar\partial S(z)}{\pi}+\cdots\end{aligned}
\end{equation}
Thus, it's easy to get,
$$I=\left(\frac{\pi^{m-k}}{(m-k)!}\right)^2[a_1(h,s)N^{2k}+w_2(k)a_2(h,s)N^{2k-2}+w_3(k)a_3(h,s)N^{2k-3}+\cdots]$$
where all $a_j$ are the same as in Lemma \ref{k1} and $w_j$ is the weight of $a_j$, in particular, $w_2=\frac{k}{m}$.

To the second term $II$, we need to decompose it to be,
$$\begin{aligned}II&=\int_M\int_{r_g(z,w)\leq \frac{b\log \sqrt{N}}{\sqrt N}}\cdots+\int_M\int_{r_g(z,w) \geq \frac{b\log \sqrt{N}}{\sqrt N}}\cdots\\
&=: III+IV
\end{aligned}$$
Recall the estimate (\ref{of2f}),
$$K_{km}^N(z,w)-E^N_{km}(z)\wedge E^N_{km}(w)=O(N^{-q}) \,\,\,\mbox{on} \,\,\,r_g(z,w) \geq \frac{b \sqrt{\log N}}{\sqrt{N}} $$
this is true for any $k$. Thus following the argument in Lemma \ref{toe},
$$IV =O(N^{-q+\frac{s}{2}})$$
for $q$ large enough. Thus $IV$ can be ignored in the final expansion of $E_{\mu_{h}^N}\mathcal E_{s}$.
Thus we will finish the proof if we can get the estimate of $III$.

We change variables,
$$w=z+\frac u{\sqrt N}$$
Then by the rescaling property of (\ref{current}), we have,
\begin{align}\label{become H}
K^N_{km}(z,z+\frac u{\sqrt N})=N^k\kappa_{km}(|u|)\left(\frac{\omega}{\pi}\right)^k\wedge(\frac
i{2\pi}\partial\bar\partial |u|^2)^k+O(N^{k-\frac{1}{2}})
\end{align}

Following the step in proving the estimate (\ref{K-K}), we have,
\begin{equation}{\label{aaag}}
E^N_{km}(z)\wedge E^N_{km}(z+\frac u{\sqrt N})=N ^k \left(\frac{\omega}{\pi}\right)^k\wedge
\left(\frac i {2\pi}\partial\bar\partial |u|^2 \right)^k+ O(N^{k-\frac{1}{2}})
\end{equation}


Thus we further rewrite $III$ as,
\begin{align*}III=N^{2k-m}\left(\frac{\pi^{m-k}}{(m-k)!}\right)^2\int_M\int_{|u|_g \leq b \sqrt{\log N}}
&G_s(z,z+\frac u{\sqrt N})\left[\kappa_{km}(|u|)-1\right]\\
&\cdot \left(\frac{\omega}{\pi}\right)^m\wedge \left(\frac{i}{2\pi}\partial\bar\partial |u|^2\right)^m + R_s
\end{align*}
where \begin{align*}
R_s=O(N^{2k-m-\frac{1}{2}}) \int_M\int_{|u|_g \leq b \sqrt{\log N}}
G_s(z,z+\frac u{\sqrt N})\left(\frac{\omega}{\pi}\right)^m\wedge \left(\frac{i}{2\pi}\partial\bar\partial |u|^2\right)^m
\end{align*}
Thus by the same argument in Lemma \ref{relmmma2}, we have,
\begin{equation}R_s=O\left(N^{2k-m+\frac{s-1}{2}}(\log N)^{m-\frac{s}{2}}\right) \,\,\,\text{if $0<s<2m$,}
\end{equation}
Applying the asymptotics of geodesic distance function (\ref{sidnc}) near the origin, we rewrite,
\begin{equation}\label{eq:newejffffjjjq}\begin{aligned}III= &N^{2k-m+\frac{s}{2}}\left(\frac{\pi^{m-k}}{(m-k)!}\right)^2\int_M\int_{0\leq|u| \leq \infty} \frac{1}{|u|^s}\left[\kappa_{km}(|u|)-1\right]\left(\frac{\om}{\pi}\right)^m\wedge \left(\frac{i}{2\pi}\partial\bar\partial |u|^2\right)^m\\
&+R_s+\mbox{lower order terms}\\
=&  N^{2k-m+\frac{s}{2}}\left(\frac{\pi^{m-k}}{(m-k)!}\right)^2\frac{V_{2m-1}}{\pi^m}\int_{0\leq|u| \leq \infty}\frac{1}{|u|^s}\left[\kappa_{km}(|u|)-1\right]|u|^{2m-1}d|u|\\
&+R_s+\mbox{lower order terms}\\
=&: d_m(s,k) N^{2k-m+\frac{s}{2}}+R_s+\mbox{lower order terms}
\end{aligned}\end{equation}
by polar coordinate.

The integrand in $d(s,k)$ is always integrable on the tail $a<|u|<\infty$ as $a$ large enough because of
asymptotic property $\kappa_{km}(r)=1+O(r^4e^{-r^2})$
as $r\to \infty$. To ensure $d_m(s,k)$ to be well defined near the origin, recall that when $k\neq m$,
$$\kappa_{km}(r)\sim r^{-2k} \,\,\,\mbox{as}\,\,\, r\to 0,$$ hence,  $s<2(m-k)$ is a necessary condition.

\end{proof}

Now we can turn to prove Theorem \ref{con},
\begin{proof} For the case $s\neq 0$, $s<2(m-k)$,
recall the decomposition of $E_{\mu_{h}^N}\mathcal E_{s}$,
\begin{align*}E_{\mu_{h}^N}\mathcal E_{s}=&I+II\\
=&\left(\frac{\pi^{m-k}}{(m-k)!}\right)^2[a_1(h,s)N^{2k}+w_2(k)a_2(h,s)N^{2k-2}+\cdots]+O(N^{2k-p})\\
&+d_m(s,k)N^{2k-m+\frac{s}{2}}+O\left(N^{2k-m+\frac{s-1}{2}}(\log N)^{m-\frac{s}{2}}\right)+ O(N^{-q+\frac{s}{2}})
  \end{align*} The smallest integer $p$ such that $2k-(p-1)>2k-m+\frac{s}{2}$ will ensure the above expansion is well defined, i.e, $p=[m-\frac{s}{2}]+1$ which completes the proof.  If we further choose $q$ large enough, we obtain
  \begin{align*}E_{\mu_{h}^N}\mathcal E_{s}
=&\left(\frac{\pi^{m-k}}{(m-k)!}\right)^2[a_1(h,s)N^{2k}+w_2(k)a_2(h,s)N^{2k-2}+\cdots]\\
&+d_m(s,k)N^{2k-m+\frac{s}{2}}+O\left(N^{2k-m+\frac{s-1}{2}}(\log N)^{m-\frac{s}{2}}\right)
  \end{align*}  \end{proof}
The same method is also applied to logarithm energies when $s=0$, but we omit the proof here.

\end{document}